\newtheorem{theorem}{Theorem}[section]
\newtheorem*{recalltheorem}{Theorem~\ref{log-concave}}
\newtheorem{lemma}[theorem]{Lemma}
\newtheorem{proposition}[theorem]{Proposition}
\newtheorem{definition}[theorem]{Definition}
\newtheorem{corollary}[theorem]{Corollary}
\newtheorem{conjecture}[theorem]{Conjecture}
\newtheorem{remark}[theorem]{Remark}
\title{Log-concavity of the Alexander polynomial}\date{}
\author{Elena S.~Hafner}
\address{Elena S.~Hafner, Department of Mathematics, Cornell University, Ithaca, NY 14853. \newline\textup{esh83@cornell.edu}
}
\author{Karola M\'esz\'aros}
\address{Karola M\'esz\'aros, Department of Mathematics, Cornell University, Ithaca, NY 14853. \newline\textup{karola@math.cornell.edu}
}
\author{Alexander Vidinas}
\address{Alexander Vidinas, Department of Mathematics, Cornell University, Ithaca, NY 14853. \newline\textup{acv42@cornell.edu}
}
\thanks{Karola M\'esz\'aros received support from CAREER NSF Grant DMS-1847284, as did  Elena S.~Hafner and Alexander Vidinas.}
\begin{document}
\maketitle

\begin{abstract} The central question of knot theory is that of distinguishing links up to isotopy. The first polynomial invariant of links devised to help answer this question was the Alexander polynomial (1928). Almost a century after its introduction, it still presents us with tantalizing questions, such as  Fox's conjecture (1962) that the absolute values of the coefficients of the Alexander polynomial $\Delta_L(t)$ of an alternating link $L$ are unimodal.  Fox's conjecture remains open in general with special cases settled by Hartley (1979) for two-bridged knots, by Murasugi (1985) for a family of alternating algebraic links, and by Ozsv\'ath and Szab\'o (2003) for the case of genus $2$ alternating knots, among others. 

We settle Fox's conjecture for special alternating links. We do so by proving that a certain multivariate generalization of the Alexander polynomial of special alternating links is Lorentzian.  As a consequence, we obtain that the absolute values of the coefficients of $\Delta_L(t)$, where $L$ is a special alternating link, form a log-concave sequence with no internal zeros. In particular, they are unimodal. 

\end{abstract}

\section{Introduction}

 The central question of knot theory is that of distinguishing links up to isotopy. Knot invariants are devised for this purpose.  The Alexander polynomial $\Delta_L(t)$, associated to an oriented link $L$, was the first polynomial knot invariant, discovered in the 1920s \cite{alexander1928topological}.  The key property of the Alexander polynomial is that if oriented links $L_1$ and $L_2$ are isotopic, then $\Delta_{L_1}(t)=\Delta_{L_2}(t)$ up to multiplication by $\pm x^k$ for some integer $k$.
 
 The coefficients of $\Delta_L(t)$ for an arbitrary link $L$ are palindromic. 
In 1962, Fox \cite{fox} conjectured that for alternating links, the absolute values of the coefficients of Alexander polynomials are unimodal. We will normalize the Alexander polynomial so that for alternating links $L$, the polynomial $\Delta_L(-t)\in \mathbb{Z}_{\geq 0}[t].$ We can thus write Fox's conjecture as:

\begin{conjecture} \label{fox} \cite{fox} Let $L$ be an alternating  link. Then the coefficients of $\Delta_L(-t)$ form a unimodal sequence.

\end{conjecture}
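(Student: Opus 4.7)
The plan is to reduce Conjecture~\ref{fox} to the paper's main result on special alternating links, where the stronger property of log-concavity with no internal zeros has been established. Since log-concavity with no internal zeros is preserved under products of polynomials with non-negative coefficients, a sufficiently clean multiplicative reduction to the special alternating case would yield Fox's conjecture in full generality.

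First I would invoke the geometric decomposition available for alternating links: any alternating diagram $D$ of $L$ produces via Seifert's algorithm a Seifert surface $F$ that admits an iterated Murasugi sum presentation $F = F_1 \ast \cdots \ast F_k$, where each $F_i$ is a Seifert surface of a special alternating sublink $L_i$. These pieces correspond to maximal clusters of Seifert circles sharing a common nesting orientation in the checkerboard picture, and by a theorem of Gabai the pieces realize the minimal genus at every stage. By the paper's main theorem, each polynomial $\Delta_{L_i}(-t)$ has non-negative log-concave coefficients with no internal zeros.

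Second I would analyze the Seifert matrix of $F$ in terms of those of the pieces. With respect to a basis adapted to the decomposition, the Seifert matrix of a Murasugi sum has block form $\begin{pmatrix} V_1 & A \\ B & V_2 \end{pmatrix}$ where $A, B$ encode the gluing data. The Alexander polynomial $\det(tV - V^{\top})$ then admits a Cauchy--Binet-type expansion in minors of the $V_i$'s together with $A, B$. The goal is to interpret these minors combinatorially---ideally by extending the multivariate Lorentzian construction of the paper so as to account for both Tait graphs of $D$ simultaneously---and to deduce that the resulting univariate polynomial $\Delta_L(-t)$ is log-concave with no internal zeros, hence unimodal.

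The main obstacle, and the reason Conjecture~\ref{fox} has resisted proof since 1962, is that the Alexander polynomial is \emph{not} multiplicative under Murasugi sum in general: the block-triangular structure of $V$ is disrupted in $tV - V^{\top}$ by a $tA$ term in the off-diagonal block. For example, the figure-eight knot has $\Delta(t) = t^2 - 3t + 1$, which does not equal $(t-1)^2 = t^2 - 2t + 1$, the product of the Alexander polynomials of the Hopf links whose Seifert surfaces plumb to form it. Controlling the cross-terms arising from $A$ and $B$---either combinatorially within a Lorentzian framework that mixes the two Tait graphs, or by establishing a suitable closure property of log-concavity under the specific Murasugi sums arising from Seifert's algorithm on alternating diagrams---is the essential new ingredient that any complete proof along these lines must supply.
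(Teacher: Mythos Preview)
The statement you are attempting to prove is Conjecture~\ref{fox}, which the paper explicitly leaves open. The paper does \emph{not} contain a proof of this statement; it proves only the special case of special alternating links (Theorem~\ref{log-concave}). So there is no ``paper's own proof'' to compare against.

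Your proposal is not a proof but an outline of a research program, and you yourself identify the gap. The Murasugi-sum decomposition of an alternating Seifert surface into special alternating pieces is indeed the natural bridge from Theorem~\ref{log-concave} to the full conjecture, and it is exactly the avenue one would hope to exploit. But as you acknowledge, the Alexander polynomial is not multiplicative under Murasugi sum: the off-diagonal block $tA - B^{\top}$ in $tV - V^{\top}$ spoils the block-triangularity, and your figure-eight example shows the discrepancy is genuine. The final paragraph lists what ``any complete proof along these lines must supply'' without supplying it. Controlling those cross-terms---whether by a bivariate Lorentzian refinement that handles both Tait graphs at once, or by some closure property of log-concavity tailored to the particular Murasugi sums arising from alternating diagrams---is precisely the missing idea, not a technical detail to be filled in later. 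Until that step is carried out, the argument does not establish unimodality for any alternating link beyond the special alternating ones already covered by the paper.
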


 The conjecture remains open in general, although some special cases have been settled  by Hartley \cite{H79} for two-bridged knots,  Murasugi \cite{murasugi} for a family of alternating algebraic links, and Ozsv\'ath and Szab\'o \cite{OS03} for the case of genus $2$ alternating knots, 
 among others.  At the 2018 ICM, June Huh highlighted this sequence as one of \textit{``the most interesting sequences that are conjectured to be {\bf log-concave}"} \cite{huh}. Huh was referring to 
Stoimenow's \cite{stoi} strengthening of Fox's conjecture from unimodality to log-concavity. 

\medskip

In this paper, we show: 

\begin{theorem} \label{log-concave} The coefficients of the Alexander polynomial $\Delta_L(-t)$ of a special alternating link $L$ form a  log-concave sequence with no internal zeros. In particular, they are  unimodal, proving Fox's conjecture in this case.  
\end{theorem}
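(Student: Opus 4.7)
The plan is to deduce Theorem~\ref{log-concave} from the theory of Lorentzian polynomials of Br\"and\'en and Huh, as hinted at in the abstract. Concretely, my strategy has three steps: (i) construct a multivariate polynomial $\widetilde{\Delta}_L(t_1,\dots,t_n)$ whose diagonal specialization $t_1=\dots=t_n=t$ recovers $\Delta_L(-t)$ up to a unit; (ii) prove that $\widetilde{\Delta}_L$ is Lorentzian; and (iii) invoke the Br\"and\'en--Huh theorem that diagonal specializations of Lorentzian polynomials remain Lorentzian, together with the fact that the coefficients of a univariate (homogeneous) Lorentzian polynomial form a log-concave sequence with no internal zeros.

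The natural candidate for $\widetilde{\Delta}_L$ comes from the Seifert surface built from the alternating diagram. Because $L$ is \emph{special} alternating, the associated Seifert graph $G$ is planar and bipartite, and the Seifert matrix $M$ admits a clean combinatorial description in terms of $G$ (closely related to a signed Laplacian of $G$). Writing $\Delta_L(t) \doteq \det(tM - M^T)$ (equality up to a unit), I would replace the scalar $t$ by a diagonal matrix $T = \operatorname{diag}(t_1,\dots,t_n)$ indexed by the bounded regions of the diagram (equivalently, the non-root vertices of $G$) and set $\widetilde{\Delta}_L = \det(TM - M^T)$. Expanding via a matrix-tree style argument, one should obtain a nonnegative-coefficient polynomial enumerating a natural family of spanning substructures of $G$, weighted by monomials in the $t_i$; nonnegativity of the resulting coefficients is where the sign structure forced by specialness and bipartiteness becomes crucial.

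The core of the proof is step (ii), proving that $\widetilde{\Delta}_L$ is Lorentzian; this is where I expect the main technical obstacle. The most promising approach is to realize the coefficients of $\widetilde{\Delta}_L$ as mixed volumes of an explicit family of polytopes attached to $G$---natural candidates include generalized permutohedra, matroid polytopes of the graphic matroid of $G$, or root/flow polytopes of the bipartite planar graph $G$. Once such an identification is established, Lorentzianness would follow from the Alexandrov--Fenchel inequalities via the Br\"and\'en--Huh characterization of Lorentzian polynomials as limits of volume polynomials. A more hands-on alternative is to verify the Hessian-signature condition directly for $\det(TM - M^T)$, using planarity and bipartiteness of $G$ to control the sign pattern of the partials; this seems harder to execute cleanly without a polytopal model.

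Granting step (ii), the conclusion is immediate: setting $t_i = t$ in the Lorentzian polynomial $\widetilde{\Delta}_L$ produces a univariate Lorentzian polynomial whose coefficient sequence is log-concave with no internal zeros, hence in particular unimodal. This is exactly Theorem~\ref{log-concave}, settling Fox's conjecture for special alternating links.
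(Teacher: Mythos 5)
Your high-level strategy (multivariate lift of the Alexander polynomial, prove it Lorentzian, specialize) is indeed the paper's strategy, but as written the proposal has its core step missing and a structural error in the specialization step. The structural error first: Lorentzian polynomials are by definition homogeneous, and $\det(TM-M^T)$ is not homogeneous in $t_1,\dots,t_n$ (it is multilinear with terms of varying degree), so ``$\widetilde{\Delta}_L$ is Lorentzian'' does not typecheck without passing to a homogenization; and once you homogenize, the diagonal specialization $t_1=\cdots=t_n=t$ of a homogeneous polynomial is a single monomial $t^d f(1,\dots,1)$, which carries no coefficient sequence. What is actually needed is a specialization into \emph{two} blocks of variables, producing a bivariate homogeneous polynomial $\sum_k c_k t^k q^{d-k}$, to which one applies the coefficient inequality for (normalizations of) Lorentzian polynomials (Proposition~\ref{lem:log-concavity-of-coeffs}). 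In the paper this two-block structure is not an afterthought: it comes from Lemma~\ref{inspire}, which shows that for a special alternating link the clockwise regions of $\overrightarrow{\mathcal{G}(L_G)}$ split into all-$(-t)$ and all-$1$ regions, so that $M_{\overrightarrow{\mathcal{G}(L_G)}}(t,\dots,t,q,\dots,q)={\rm Homog}_q(\Delta_{L_G}(-t))$ (Theorem~\ref{thm:spec}). You also need the distinction between Lorentzian and \emph{denormalized} Lorentzian, since the relevant multivariate polynomial has $0/1$ coefficients and is only denormalized Lorentzian; the variable-merging step then uses Lemma~\ref{pak}, which is stated for denormalized Lorentzian polynomials.

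The genuine gap is step (ii), which you yourself flag as the main obstacle and leave as a hope (mixed volumes, Alexandrov--Fenchel, or a direct Hessian computation): nothing in the proposal establishes nonnegativity of the coefficients of your determinantal lift, let alone M-convexity of its support or the Lorentzian property, and the ``sign structure forced by specialness'' is exactly the nontrivial content. The paper avoids Alexandrov--Fenchel and Hessian computations entirely. It works with Crowell's arborescence model rather than the Seifert matrix: the multivariate polynomial $M_{D,r}$ is a sum over arborescences with one variable per clockwise region, so nonnegativity is automatic; Kálmán's exchange argument (Lemma~\ref{Lem:TreestoArbs}) identifies its support with that of the spanning-tree generating function $f_D$, which is M-convex because it is a nonnegative linear specialization of the matroid-base-polytope generating polynomial (Theorems~\ref{3.10} and \ref{2.10}); and Kálmán's uniqueness lemma (Lemma~\ref{Lem:ArbsUnique}) shows the coefficients are all $0$ or $1$, so Theorem~\ref{3.10} applies directly to give that $M_D$ is denormalized Lorentzian (Theorem~\ref{thm:denorm}). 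If you want to pursue the determinantal route (such a determinant does appear in Juhász--Kálmán--Rasmussen), you would still have to supply equivalents of these three facts---nonnegativity, M-convex support, and control of the coefficients---so as it stands the proposal is a plausible outline of the paper's architecture with the proof of its central theorem omitted.
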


Inspired by Crowell's combinatorial model for the Alexander polynomial of alternating links \cite{Crowell}, we study a homogeneous multivariate polynomial which we term the $M$-polynomial (because its support is $M$-convex).  This polynomial previously appeared in the works of K\'alm\'an \cite{tamas} and Juh\'asz, K\'alm\'an, and Rasmussen \cite{jkr}.  We discovered the $M$-polynomial via Crowell's construction. We prove that the $M$-polynomial specializes to $\Delta_L(-t)$ for special alternating links $L$. We also prove that the $M$-polynomial is denormalized Lorentzian, opening the door to use the powerful theory of Lorentzian polynomials developed by 
  Br\"and\'en and Huh \cite{bh2019}. Relying on the theory of Lorentzian polynomials, we prove that the coefficients of the Alexander polynomial $\Delta_L(-t)$ of a special alternating link $L$ form a log-concave sequence with no internal zeros.

\bigskip

\noindent {\bf Roadmap of the paper.} In Section \ref{Sec:background}, we review the necessary background for the paper. In Section \ref{Sec:MPoly}, we show that the $M$-polynomial arises naturally from Crowell's model of the Alexander polynomial of alternating links. In Section \ref{Sec:results}, we prove Theorem \ref{log-concave}.

\section{Background} \label{Sec:background}
In this section, we collect the most important results and notions used in our paper: (1) Crowell's model for the Alexander polynomial of alternating links; (2) the construction  of special alternating links from planar bipartite graphs; (3) background on the theory of Lorentzian polynomials.

\subsection{Crowell's model} We will use the following combinatorial model for the Alexander polynomial of  alternating links  due to Crowell \cite{Crowell}. Recall that a link is alternating if it has an alternating diagram.   

 Let $\mathcal{G}(L)$ be the planar graph obtained by flattening the crossings of an alternating diagram of  $L$;  the crossings of $L$ are the vertices of $\mathcal{G}(L)$ while the arcs between the crossings are the edges of $\mathcal{G}(L)$.  Note that $\mathcal{G}(L)$ is a planar  $4$-regular $2$-face colorable graph. Next, we assign directions to the edges of $\mathcal{G}(L)$ -- but not those coming from the orientation of the link -- as well as weights in the following way: 
 
 \bigskip
 
\begin{center} \begin{tikzpicture}
\draw[black, thick] (1,0) -- (1,.9);
\draw[black, thick] (1,1.1) -- (1,2);
\draw[-stealth,black, thick] (0,1) -- (2,1);
\draw[-stealth,black,thick] (5,0)--(5,1);
\draw[-stealth,black,thick] (5,2)--(5,1);
\draw[black,thick] (4,1) --(6,1);
\node at (3,1.2) {becomes};
\node at (4.8,1.5) {$1$};
\node at (4.7,.5) {$-t$};
\end{tikzpicture} \end{center} 

On the left, we see the orientation of the link $L$ in an overcrossing, and on the right, we see how the edges of $\mathcal{G}(L)$ are directed and weighted. Denote by $\overrightarrow{\mathcal{G}(L)}$ the oriented weighted graph obtained from $\mathcal{G}(L)$ in this fashion. Let $var(e)$ be the weight $-t$ or $1$ assigned to the edge $e \in E(\overrightarrow{\mathcal{G}(L)})$. See Figure \ref{fig:bipandlink} for a full example.

\begin{theorem}[\cite{Crowell} Theorem 2.12]
    \label{Thm:CrowellModel} Given an alternating diagram of the link $L$, fix an arbitrary vertex $r \in V(\overrightarrow{\mathcal{G}(L)})$. Denote by $\mathcal{A}(L,r)$ the set of arborescences of $\overrightarrow{\mathcal{G}(L)}$ rooted at $r$. The Alexander polynomial of $L$ is: 
  
    \[\Delta_L(t)=\sum_{\substack{T\in \mathcal{A}(L,r)}} \prod_{e \in E(T)} var(e). \]
\end{theorem}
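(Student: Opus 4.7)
The plan is to use the Matrix--Tree Theorem as the bridge between the combinatorial arborescence sum and a classical presentation of the Alexander polynomial. Tutte's theorem for weighted digraphs gives
\[
\sum_{T \in \mathcal{A}(\vec{G},r)} \prod_{e \in E(T)} w(e) \;=\; \det\bigl(\tilde{L}_r\bigr),
\]
where $\tilde{L}_r$ is obtained by deleting the row and column indexed by $r$ from the weighted Laplacian of $\vec{G}$ (whose off-diagonal entry $(u,v)$ records $-\sum_{u \to v} w(e)$ and whose diagonal entries are the sums of out-edge weights). Applied to $\overrightarrow{\mathcal{G}(L)}$ with edge weights $var$, it suffices to show that $\det(\tilde{L}_r) = \Delta_L(t)$ for some (equivalently, any) choice of root $r$.

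To bring $\Delta_L(t)$ into the picture, I would use the Wirtinger presentation of $\pi_1(S^3 \setminus L)$: one generator per Wirtinger arc of the diagram, and one relation per crossing of the form $x_c = x_b x_a x_b^{-1}$ (with signs depending on the crossing sign). Taking Fox free derivatives and then abelianizing via $x_a \mapsto t$ produces the Alexander matrix $A(L)$, whose first minors equal $\Delta_L(t)$ up to a unit $\pm t^k$. The identification of such a minor with $\det(\tilde{L}_r)$ rests on a diagram-theoretic observation: in an alternating diagram, each edge of $\mathcal{G}(L)$ is an understrand at exactly one of its two endpoints, which uniquely determines its orientation in $\overrightarrow{\mathcal{G}(L)}$. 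The two Fox derivatives contributed at that under-crossing give precisely the values $1$ and $-t$, one for each side of the overstrand, matching the weighting rule in the excerpt. Summing the rows of $A(L)$ along Wirtinger arcs (each of which is a path of consecutive overstrand edges of $\mathcal{G}(L)$) collapses $A(L)$ to the Laplacian of $\overrightarrow{\mathcal{G}(L)}$, after which deleting the row and column indexed by $r$ yields $\tilde{L}_r$.

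The main obstacle is the explicit bookkeeping of signs, orientations, and arc groupings in the comparison step: one must verify that abelianized Fox derivatives at each crossing reproduce the directed Laplacian entries on the nose (in particular, that the diagonal entry at a vertex $v$ is $\sum_{e \text{ out of } v} var(e)$), and reconcile Crowell's normalization of $\Delta_L$ with the $\pm t^k$ ambiguity inherent in the Alexander matrix construction. Once these identifications are made, the independence of the root $r$ in the arborescence sum is automatic: different first minors of a Laplacian-type matrix differ only by units, which is compatible with the standard ambiguity of $\Delta_L$.
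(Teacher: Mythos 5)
A preliminary remark: the paper does not prove this statement at all --- it is Crowell's Theorem 2.12, quoted from \cite{Crowell} and used as a black box --- so there is no in-paper proof to compare against. Your route (directed matrix--tree theorem on one side, Wirtinger presentation plus Fox calculus on the other, then matching a reduced Laplacian-type matrix with a first minor of the Alexander matrix up to units) is in fact essentially the classical argument, close in spirit to Crowell's own, which rests on the Bott--Mayberry (directed matrix--tree) theorem applied to a matrix extracted from the diagram. So the overall strategy is sound.

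Two details in your sketch are wrong as stated and need repair. (i) Your matrix--tree convention is transposed. The paper's arborescences have a unique directed path \emph{from} the root $r$ to every vertex, so every non-root vertex has in-degree one in the tree; Tutte's theorem then requires the reduced Laplacian whose diagonal entries are the weighted \emph{in}-degrees, whereas the out-degree Laplacian you describe counts arborescences converging to $r$. The in-degree version is also the one that matches Fox calculus: in $\overrightarrow{\mathcal{G}(L)}$ every vertex has exactly two incoming edges (the two under-strand edges at that crossing), weighted $1$ and $-t$, so every diagonal entry is $1-t$, which is (up to a unit in each row) the abelianized Fox derivative of the Wirtinger relation with respect to the overstrand generator. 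The verification target you state --- that the diagonal entry at $v$ is $\sum_{e \text{ out of } v} var(e)$ --- is false in general. (ii) The ``summing the rows of $A(L)$ along Wirtinger arcs'' step is not the right mechanism: rows of $A(L)$ are indexed by crossings and columns by arcs, and in an alternating diagram each Wirtinger arc passes over exactly one crossing and consists of exactly two edges of $\mathcal{G}(L)$. The identification is therefore a reindexing of the columns via the bijection sending an arc to the crossing it overpasses (together with row-by-row unit adjustments reconciling the Fox entries $t,\,-1,\,1-t$ with Crowell's weights), not a row-summing. With these corrections, and the care you already flag about the $\pm t^{k}$ ambiguity and Crowell's normalization, the argument goes through.
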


Recall that an \textbf{arborescence} rooted at $r$ is a spanning tree in which there is a unique directed path to any vertex from the root $r$.

\subsection{Special alternating links} We follow the construction presented by Juh\'asz, K\'alm\'an, Rasmussen \cite{jkr} and  K\'alm\'an, Murakami \cite{murakami}, associating a positive special alternating link $L_G$ to a planar bipartite graph $G$. Let $M(G)$ be the medial graph of $G$: the vertices of $M(G)$ are the edges of $G$, and two vertices of $M(G)$ are connected by an edge if the edges of $G$ that they come from are consecutive in the boundary of a  face of $G$.  We think of a particular planar drawing of $M(G)$ here:  the midpoints of the edges of the planar drawing of $G$ are the vertices of $M(G)$. Thinking of $M(G)$ as a flattening of a link, there are two ways to choose under and overcrossings at each vertex of $M(G)$ to make it into an alternating link $L_G$.  We select the over and undercrossings and orient $L_G$ so that each crossing is positive.  This procedure yields a positive special alternating link. Moreover, any positive special alternating link arises from such a construction.  Figure \ref{fig:bipandlink} shows an example of this construction.

If the link associated to the planar bipartite graph $G$ as above is instead oriented so that every crossing is negative, we  obtain a negative special alternating link and denote it $L_G^{\text{neg}}$.  All special alternating links are either positive or negative.

\begin{figure}
\centering
\includegraphics[height=5.5cm]{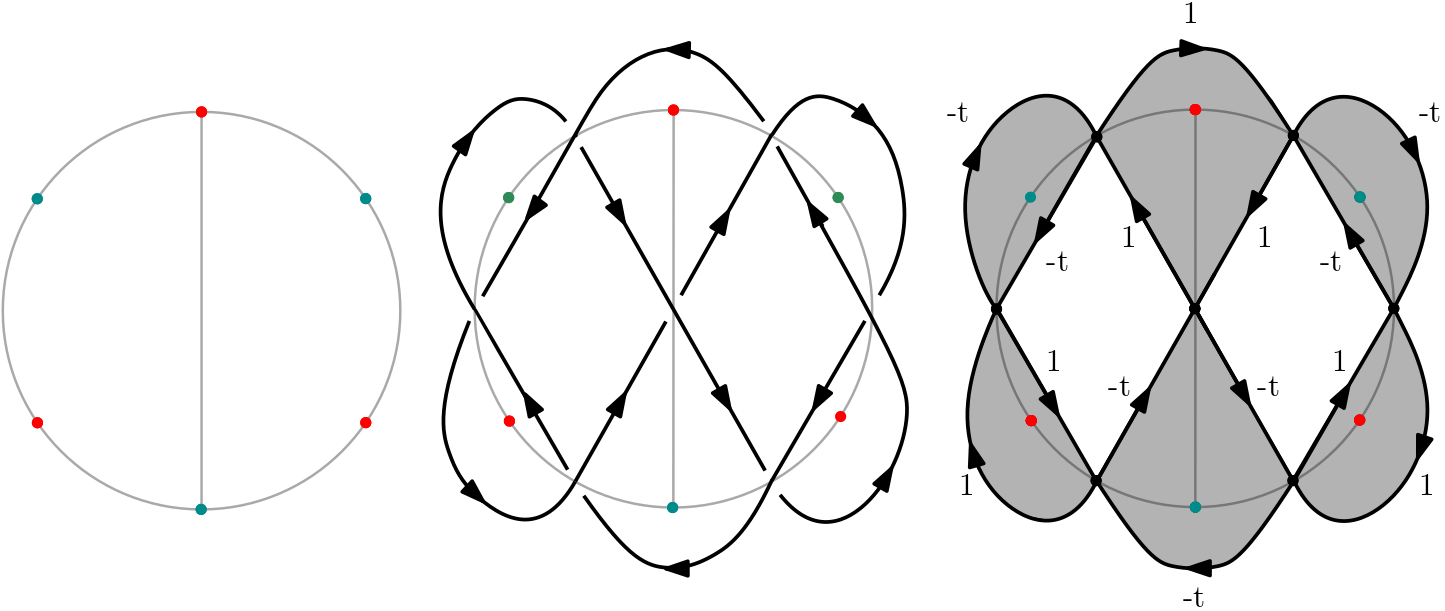}
\caption{A planar bipartite graph along with its associated positive special alternating link $L_G$.  On the right, we show $\protect\overrightarrow{\mathcal{G}(L_G)}$, oriented and weighted as in Crowell's model. }
\label{fig:bipandlink}
\end{figure}

\subsection{Lorentzian polynomials}
\label{sec:lor}
Let $\mathbb{N}=\{0,1, 2, \ldots\}$, and denote by $e_i$ the $i$th standard basis vector of $\mathbb{N}^n$. A subset $J\subseteq \mathbb{N}^n$ is called \textbf{$\mathrm{M}$-convex} if for any index $i$ and any $\alpha,\beta \in J$ whose $i$th coordinates satisfy $\alpha_i > \beta_i$,
there is an index $j$ satisfying
\[\alpha_j<\beta_j, \ \  \alpha-e_i+e_j \in J, \ \ \text{and} \ \ \beta-e_j+e_i \in J.\]

Let $\mathrm{H}^d_n$ be the space of degree $d$ homogeneous polynomials with real coefficients in the $n$ variables $x_1,\ldots,x_n$. For $f \in \mathrm{H}^d_n$, we write $\text{supp}(f) \subseteq \mathbb{N}^n$ for the support of $f$. Denote by $\frac{\partial}{\partial x_{i}} f$ the partial derivative of $f$ relative to $x_i$.
The \textbf{Hessian} of a homogeneous quadratic polynomial $f \in \mathrm H^2_n$ is the symmetric $n\times n$ matrix $H = (H_{ij})_{i,j\in[n]}$ defined by $H_{ij} = \frac{\partial}{\partial x_{i}}\frac{\partial}{\partial x_{j}} f$.
	The set $\mathrm{L}^d_n$ of \textbf{Lorentzian polynomials} with degree $d$ in $n$ variables is defined as follows.
	Set $\mathrm L_n^1\subseteq \mathrm H^1_n$ to be the set of all linear polynomials with nonnegative coefficients. Let $\mathrm{L}^2_n \subseteq \mathrm{H}^2_n$ be the subset of quadratic polynomials with nonnegative coefficients whose Hessians have at most one positive eigenvalue and which have $\mathrm{M}$-convex support. For $d>2$, define $\mathrm{L}^d_n \subseteq \mathrm{H}^d_n$ recursively by 
	\[\mathrm{L}^d_n=\left\{f \in \mathrm{M}^d_n\colon \frac{\partial}{\partial x_{i}} f \in \mathrm{L}^{d-1}_n \mbox{ for all } i\right\}\]
	where $\mathrm{M}^d_n \subseteq \mathrm{H}^d_n$ is the set of polynomials with nonnegative coefficients whose supports are $\mathrm{M}$-convex.

Since $f \in \mathrm{M}^d_n$ implies $\frac{\partial}{\partial x_{i}} f \in \mathrm{M}^{d-1}_n$, we have
\[
\mathrm{L}^d_n=\left\{f \in \mathrm{M}^d_n\colon \frac{\partial}{\partial x_{i_1}}\frac{\partial}{\partial x_{i_2}} \cdots \frac{\partial}{\partial x_{i_{d-2}}} f \in \mathrm{L}^{2}_n \mbox{ for all }i_1,i_2,\ldots,i_{d-2}\in [n] \right\}.
\]

The \textbf{normalization operator} $N$ on $\mathbb{R}[x_1,\ldots,x_n]$ is defined by:

$$N(\mathbf{x}^{\alpha})=\frac{\mathbf{x}^\alpha}{\alpha!},$$
where for a vector $\alpha = (\alpha_1, \dots, \alpha_n)$ of nonnegative integers, we write $\alpha!$ to mean $\prod_{i=1}^n\alpha_i!$.  
 \medskip

 A polynomial $f\in\mathrm H_n^d$ is a \textbf{denormalized Lorentzian polynomial} in $n$ variables if $N(f)\in\mathrm L_n^d$. 

\medskip

We collect here four results that we will utilize in this paper:

\begin{theorem}[{\cite[Theorem 2.10]{bh2019}}]
\label{2.10}
If $f\in\mathrm L_n^d$ is a Lorentzian polynomial in $n$ variables and $A$ is an $n\times m$ matrix with nonnegative entries, then $f(A\mathbf v)\in \mathrm L_m^d$ is a Lorentzian polynomial in the $m$ variables $\mathbf v = (v_1, \dots, v_m)$.
\end{theorem}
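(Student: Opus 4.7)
The plan is to proceed by induction on the degree $d$, using the recursive definition of $\mathrm{L}^d_n$ together with the chain rule. For $d \leq 1$ the statement is immediate: a constant or linear polynomial with nonnegative coefficients is sent to another such polynomial by any substitution $\mathbf{x} = A\mathbf{v}$ with $A \geq 0$. For the base case $d = 2$, set $g(\mathbf{v}) := f(A\mathbf{v})$ and verify the three defining conditions of $\mathrm{L}^2_m$ directly: coefficients of $g$ are nonnegative because both $A$ and $f$'s coefficients are; the Hessian satisfies $H_g = A^{\top} H_f A$, which inherits ``at most one positive eigenvalue'' from $H_f$ by the Courant-Fischer min-max characterization; and $\text{supp}(g)$ is $\mathrm{M}$-convex, which I would verify by analyzing the support of the quadratic $f(A\mathbf{v})$ directly, using the entrywise nonnegativity of $H_f$ forced by membership in $\mathrm{L}^2_n$.

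For the inductive step ($d \geq 3$), I would apply the chain rule to obtain
\[
\frac{\partial g}{\partial v_i}(\mathbf{v}) \;=\; \sum_{j=1}^n A_{ji}\,\frac{\partial f}{\partial x_j}(A\mathbf{v}) \;=\; (D_{a_i} f)(A\mathbf{v}),
\]
where $a_i \in \mathbb{R}_{\geq 0}^n$ denotes the $i$-th column of $A$ and $D_{a_i} := \sum_j A_{ji}\,\frac{\partial}{\partial x_j}$ is the associated nonnegative directional derivative. The key auxiliary lemma is that nonnegative directional derivatives preserve the Lorentzian class: if $f \in \mathrm{L}^d_n$ and $a \in \mathbb{R}_{\geq 0}^n$, then $D_a f \in \mathrm{L}^{d-1}_n$. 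This sub-lemma I would prove by its own induction on $d$; the $d = 2$ base case is easy since every quadratic in $\mathrm{L}^2_n$ has entrywise nonnegative Hessian, so the linear form $D_a f$ has coefficients $(H_f a)_k \geq 0$. Granted the lemma, $D_{a_i} f \in \mathrm{L}^{d-1}_n$, and the inductive hypothesis for Theorem~\ref{2.10} applied to $D_{a_i} f$ with the same matrix $A$ yields $(D_{a_i} f)(A\mathbf{v}) \in \mathrm{L}^{d-1}_m$, establishing the Lorentzian condition on every partial derivative of $g$.

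The main obstacle is verifying the $\mathrm{M}$-convex support of $g$ at each stage. Nonnegativity of its coefficients is automatic, but $\mathrm{M}$-convexity of $\text{supp}(g)$ is a combinatorial constraint that must be checked independently of the derivative condition. I would derive it from the $\mathrm{M}$-convex supports of the partial derivatives $\frac{\partial g}{\partial v_i}$ (known by the preceding step) via the identity $\text{supp}(g) = \bigcup_{i : \partial_{v_i} g \neq 0}(\text{supp}(\partial_{v_i} g) + e_i)$, together with the exchange axiom for $\mathrm{M}$-convex sets, which ensures the union assembles into a single $\mathrm{M}$-convex set. Alternatively, one can invoke the matroid-theoretic principle that the class of $\mathrm{M}$-convex supports is closed under the transfer induced by a nonnegative linear map, which is precisely the operation performed by $A$. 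This $\mathrm{M}$-convex support verification, together with the auxiliary directional-derivative lemma, are the most delicate pieces of the argument.
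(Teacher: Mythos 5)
First, note that the paper does not prove this statement at all: it is quoted as background from Br\"and\'en and Huh \cite{bh2019}, whose actual argument goes through the description of Lorentzian polynomials as limits of strictly Lorentzian ones and uses the M-convexity of the support in an essential way; it is not the naive induction you outline. Measured on its own terms, your sketch has a genuine gap at its central ingredient, the claim that a nonnegative directional derivative $D_a$ maps $\mathrm{L}^d_n$ to $\mathrm{L}^{d-1}_n$. You only verify the case $d=2$ and assert the rest follows ``by its own induction on $d$,'' but the inductive step is exactly where the theorem's analytic content lives: to place $D_a f$ in $\mathrm{L}^{d-1}_n$ you must show that its $(d-3)$-fold partials, which are $D_a$ applied to cubic Lorentzian polynomials, lie in $\mathrm{L}^2_n$; their Hessians are nonnegative combinations $\sum_i a_i H(\partial_{x_i} g)$ of Hessians each having at most one positive eigenvalue, and that class is \emph{not} closed under nonnegative sums (already $\mathrm{diag}(1,0)+\mathrm{diag}(0,1)$ has two positive eigenvalues). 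So the property you need does not propagate by the argument pattern you set up; in \cite{bh2019} this is precisely where M-convexity of the support (connectivity of supports, a Perron--Frobenius-type argument) is brought to bear. There is also a circularity risk: the standard way to get the directional-derivative lemma in the literature is as a corollary of Theorem~\ref{2.10} itself (substitute $\mathbf{x}\mapsto \mathbf{x}+s\,a$, differentiate in $s$, set $s=0$), so you cannot simply borrow it while proving Theorem~\ref{2.10}.

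The second gap is the M-convexity of $\mathrm{supp}(g)$ for $g(\mathbf v)=f(A\mathbf v)$. Your first route is invalid: knowing that every $\partial_{v_i} g$ has M-convex support does not imply that $\mathrm{supp}(g)$ is M-convex. For example, $x_1x_2x_3+x_4x_5x_6$ has all partial derivatives supported on single points (trivially M-convex), yet its own support violates the exchange axiom; indeed, if your implication held, the requirement $f\in\mathrm{M}^d_n$ in the recursive definition of $\mathrm{L}^d_n$ would be redundant. Your second route, that M-convex sets are closed under the transfer induced by a nonnegative matrix, is a true statement (it amounts to Murota's induction of M-convex sets through a bipartite network, together with the observation that no cancellation can occur because all coefficients involved are nonnegative), but it is itself a nontrivial theorem, so invoking it ``as a principle'' is a citation rather than a proof; the same vagueness affects your $d=2$ base case, where the M-convexity of $\mathrm{supp}(f(A\mathbf v))$ is promised but never argued. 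In short, the two pieces you yourself flag as delicate are exactly the ones left unproved, and they constitute the substance of the theorem.
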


\begin{theorem}[{\cite[Theorem 3.10]{bh2019}}]
\label{3.10}
Let $J$ be an $M$-convex set. Then the polynomial $f_J=N(\sum_{{\bf \alpha} \in J}{\bf x}^{\bf \alpha})$ is a Lorentzian polynomial. 
\end{theorem}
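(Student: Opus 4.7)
The plan is induction on the degree $d$ of the M-convex set $J$, with the base case $d=2$ as the crux.

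For $d\le 1$ the statement is immediate: any $J\subseteq\{e_1,\dots,e_n\}$ yields $f_J=\sum_{i:\,e_i\in J}x_i$, a linear form with nonnegative coefficients, hence in $\mathrm{L}_n^1$ by definition.

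The inductive engine is the identity
\[
\frac{\partial f_J}{\partial x_i}=\sum_{\alpha\in J,\ \alpha_i\ge 1}\frac{x^{\alpha-e_i}}{(\alpha-e_i)!}=f_{J_i},\qquad J_i:=\{\alpha-e_i:\alpha\in J,\ \alpha_i\ge 1\},
\]
obtained by direct calculation. I would first prove a lemma that $J_i$ is $\mathrm{M}$-convex whenever $J$ is: given $\gamma,\delta\in J_i$ with $\gamma_l>\delta_l$, lift to $\alpha=\gamma+e_i$ and $\beta=\delta+e_i$ in $J$, invoke the exchange axiom of $J$, and check that the produced exchanges descend to $J_i$. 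The only subtle case is when the exchange index equals $i$; here the strict inequality forced on the $i$-coordinates of $\alpha,\beta$ guarantees the exchanged vectors still have $i$-coordinate $\ge 1$, so they lift-descend correctly. Given this lemma, the inductive step for $d\ge 3$ is automatic: $f_{J_i}\in\mathrm{L}_n^{d-1}$ by the inductive hypothesis, and since $\mathrm{supp}(f_J)=J$ is $\mathrm{M}$-convex by assumption, the recursive definition of $\mathrm{L}_n^d$ places $f_J$ there.

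The hard part is the base case $d=2$. Here $f_J$ is a quadratic whose Hessian $H$ is a symmetric nonnegative matrix, and the degree-$2$ $\mathrm{M}$-convexity of $J$ becomes the combinatorial condition that for any $i\ne j$ and $k\ne l$ with $H_{ij}>0$ and $H_{kl}>0$, at least one of the pairs $\{H_{ik},H_{jl}\}$ or $\{H_{il},H_{jk}\}$ consists of two positive entries. One must show $H$ has at most one positive eigenvalue. I would attempt two routes. First, a direct spectral argument: after reducing to an irreducible block, Perron--Frobenius yields a unique dominant eigenvalue with positive eigenvector $u$, and the exchange property is then leveraged to show $w^{T}Hw\le 0$ for every $w\perp u$ via an explicit cancellation among off-diagonal entries. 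Second, a reduction to matroids: use the variable-splitting substitution $x_i\mapsto y_{i,1}+\cdots+y_{i,m_i}$ with $m_i=\max_{\alpha\in J}\alpha_i$ to identify $f_J$ with (a positive rescaling of) the basis generating polynomial of an auxiliary matroid, invoke the known Lorentzianness of matroid basis generating polynomials, and apply Theorem~\ref{2.10} to transfer back. The second route is likely cleaner, isolating the nontrivial spectral content into the well-studied matroid case, while the first route requires delicate exchange-based cancellations to pin down the signature of $H$.
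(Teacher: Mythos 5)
First, a point of context: the paper does not prove this statement at all --- it is imported verbatim from Br\"and\'en--Huh \cite{bh2019}, so you are attempting to reprove an external theorem rather than matching an argument in the paper. The scaffolding of your attempt is sound: the identity $\frac{\partial}{\partial x_i}f_J=f_{J_i}$ is correct, your lemma that $J_i=\{\alpha-e_i:\alpha\in J,\ \alpha_i\ge 1\}$ is again $\mathrm{M}$-convex goes through exactly as you indicate (including the delicate case where the exchange index equals $i$), and with the recursive definition of $\mathrm{L}^d_n$ this correctly reduces everything to degree $2$. But that base case is precisely where the substance of the theorem lies, and neither of your two routes establishes it. Route (a) is a restatement of the goal, not an argument: saying that the exchange property ``is leveraged to show $w^{T}Hw\le 0$ for $w\perp u$ via an explicit cancellation'' is exactly the claim that $H$ has at most one positive eigenvalue, and no mechanism for the cancellation is given.

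Route (b) rests on an identification that is false. What Theorem \ref{2.10} lets you transfer is the Lorentzian property of the merged polynomial $\sum_{\alpha}N_\alpha\mathbf{x}^\alpha$, where $N_\alpha$ is the (integer) number of bases of the auxiliary matroid lying over $\alpha$; for this to be a positive rescaling of $f_J=\sum_{\alpha\in J}\mathbf{x}^\alpha/\alpha!$ you would need $N_\alpha=C/\alpha!$ for all $\alpha$, which is in general impossible. Concretely, take $J=\{2e_1,\ e_1+e_2,\ e_1+e_3,\ e_2+e_3\}$, which is $\mathrm{M}$-convex; with the block sizes you prescribe ($m_1=2$, $m_2=m_3=1$) the fiber over $e_2+e_3$ consists of a single pair, so $N_{e_2+e_3}\le 1$, while $N_{2e_1}=C/2\ge 1$ forces $C\ge 2$ --- a contradiction, so no such matroid exists. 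The natural (free expansion) matroid instead yields $x_1^2+2x_1x_2+2x_1x_3+x_2x_3$, a genuinely different polynomial from $f_J=\tfrac12 x_1^2+x_1x_2+x_1x_3+x_2x_3$. And one cannot argue by support alone: in degree $2$ the Lorentzian property depends on the actual coefficients (e.g.\ $x_1^2+x_1x_2+x_2^2$ has $\mathrm{M}$-convex support but a positive-definite Hessian), so knowing that some polynomial with support $J$ is Lorentzian says nothing about $f_J$. Thus the quadratic case --- the part where Br\"and\'en and Huh do the real work --- remains unproved in your proposal, and the argument as written has a genuine gap.
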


\begin{proposition}[{\cite[Proposition 4.4]{bh2019}}]
\label{lem:log-concavity-of-coeffs}
If $f(\mathbf x) = \sum_\alpha c_\alpha \mathbf x^\alpha$ is a homogeneous polynomial on $n$ variables so that $N(f)$ is Lorentzian, then for any $\alpha\in\mathbb{N}^n$ and any $i,j \in [n]$, the inequality
\[
c_\alpha^2 \geq c_{\alpha + e_i - e_j} c_{\alpha - e_i + e_j}
\]
holds.
\end{proposition}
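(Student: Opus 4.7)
The plan is to reduce the claimed inequality to the Hessian spectral condition that defines $\mathrm{L}^2_n$, by taking $d-2$ well-chosen partial derivatives of $N(f)$ to isolate the three coefficients $c_\alpha$, $c_{\alpha+e_i-e_j}$, $c_{\alpha-e_i+e_j}$ as entries of a $2\times 2$ principal submatrix of a Hessian.

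First I would dispose of the trivial cases. If $i=j$, the inequality is an equality. If $\alpha_i=0$, then $\alpha-e_i+e_j\notin\mathbb{N}^n$, so $c_{\alpha-e_i+e_j}=0$ and the inequality reduces to $c_\alpha^2\geq 0$; the case $\alpha_j=0$ is symmetric. I may also assume $|\alpha|=d$, the degree of $f$, since otherwise all three coefficients vanish by homogeneity. So the real content is when $i\neq j$, $\alpha_i,\alpha_j\geq 1$, and $|\alpha|=d$.

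Set $g:=N(f)\in\mathrm{L}^d_n$ and $\beta:=\alpha-e_i-e_j\in\mathbb{N}^n$, which has $|\beta|=d-2$. By the iterated form of the recursive definition of Lorentzian polynomials recalled in Section~\ref{sec:lor}, the polynomial $q:=\frac{\partial^\beta}{\partial x^\beta}g$ lies in $\mathrm{L}^2_n$. Using $\frac{\partial^\beta}{\partial x^\beta}x^\gamma=\frac{\gamma!}{(\gamma-\beta)!}x^{\gamma-\beta}$ and keeping track of the $1/\gamma!$ in $N(f)$, the three monomials $x_i^2$, $x_ix_j$, $x_j^2$ in $q$ come from $\gamma=\alpha+e_i-e_j$, $\gamma=\alpha$, and $\gamma=\alpha-e_i+e_j$ respectively, yielding
\[
q \;=\; \tfrac{1}{2}\,c_{\alpha+e_i-e_j}\,x_i^2 \;+\; c_\alpha\,x_i x_j \;+\; \tfrac{1}{2}\,c_{\alpha-e_i+e_j}\,x_j^2 \;+\; (\text{other quadratic terms}).
\]
Consequently the Hessian $H$ of $q$ satisfies $H_{ii}=c_{\alpha+e_i-e_j}$, $H_{jj}=c_{\alpha-e_i+e_j}$, and $H_{ij}=c_\alpha$ (the factor of $2$ from differentiating $x_i^2$ twice cancels the $\tfrac{1}{2}$).

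Since $q\in\mathrm{L}^2_n$, $H$ has at most one positive eigenvalue. By Cauchy interlacing, the $2\times 2$ principal submatrix $M$ of $H$ on rows and columns $i,j$ also has at most one positive eigenvalue; since $M$ has nonnegative entries, a short trace argument shows $\det M\leq 0$ (if both eigenvalues of $M$ were nonpositive, the nonnegative trace would force $M=0$, whence $\det M=0$). This gives $H_{ii}H_{jj}\leq H_{ij}^2$, which is exactly $c_{\alpha+e_i-e_j}\,c_{\alpha-e_i+e_j}\leq c_\alpha^2$. The proof is essentially mechanical once set up; the main bookkeeping point is matching the $1/\gamma!$ normalization in $N(f)$ with the factor-of-$2$ convention in the Hessian so that the three coefficients we care about land precisely on a single $2\times 2$ principal block.
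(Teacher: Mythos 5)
Your proof is correct. The paper itself gives no argument for this statement---it is imported directly as \cite[Proposition 4.4]{bh2019}---and your route (differentiate $N(f)$ by the multi-index $\alpha-e_i-e_j$ to land in $\mathrm{L}^2_n$, then extract the inequality from the $2\times 2$ principal minor of the Hessian via the at-most-one-positive-eigenvalue condition, interlacing, and the nonnegative-trace observation) is essentially the standard proof of Br\"and\'en and Huh, with the degenerate cases $i=j$, $\alpha_i\alpha_j=0$, and $|\alpha|\neq d$ correctly dispatched.
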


\begin{lemma}\label{pak}\cite[Lemma 4.8]{pak} If $f(x_1, x_2, x_3, \ldots, x_n)\in \mathbb{R}_{\geq 0}[x_1, \ldots, x_n]$ is a denormalized Lorentzian polynomial, then  $f(x_1, x_1, x_3,  x_4, \ldots, x_n)$ is also a denormalized Lorentzian polynomial.
\end{lemma}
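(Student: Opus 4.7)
The plan is to reduce the claim to a linear algebra exercise in the base case and then use the recursive characterization of $\mathrm{L}$ in the inductive step. Set $g(x_1, x_3, \ldots, x_n) := f(x_1, x_1, x_3, \ldots, x_n)$; I must show $N(g)$ is Lorentzian. The central obstacle is that the normalization $N$ and the substitution $\Phi \colon h \mapsto h(x_1, x_1, x_3, \ldots, x_n)$ do not commute: for instance, $N(\Phi(x_1 x_2)) = \tfrac{1}{2} x_1^2$ while $\Phi(N(x_1 x_2)) = x_1^2$. So applying Theorem~\ref{2.10} to $N(f)$ with the nonnegative matrix merging the first two variables only yields that $\Phi(N(f))$ is Lorentzian, which differs from the target $N(g) = N(\Phi(f))$ by binomial factors on each coefficient.

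I induct on $d = \deg f$. The support of $N(g)$ equals the image of $\text{supp}(N(f))$ under the projection $\pi(\alpha_1, \alpha_2, \alpha_3, \ldots, \alpha_n) = (\alpha_1 + \alpha_2, \alpha_3, \ldots, \alpha_n)$, and $M$-convexity is preserved under $\pi$ by a direct verification of the exchange axiom, so the support condition for $N(g)$ is handled uniformly in $d$.

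For the base case $d = 2$, write $f = \sum_{i \leq j} a_{ij} x_i x_j$. A direct computation shows that the Hessian $H'$ of $N(g)$ equals $P H P^{\mathsf{T}} - a_{12} E_{11}$, where $H$ is the Hessian of $N(f)$, $P$ is the $(n{-}1) \times n$ matrix with first row $e_1 + e_2$ and remaining rows $e_3, \ldots, e_n$ (so $P$ merges the first two columns), and $E_{11}$ is the $(n{-}1) \times (n{-}1)$ matrix with a single nonzero entry $1$ at position $(1,1)$. Since $H$ has at most one positive eigenvalue by hypothesis, Cauchy's interlacing theorem gives the same bound for $P H P^{\mathsf{T}}$. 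Weyl's inequality for rank-one positive-semidefinite perturbations (valid since $a_{12} \geq 0$) then preserves the bound after subtracting $a_{12} E_{11}$, establishing $N(g) \in \mathrm{L}^2_{n-1}$.

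For $d > 2$, I use the recursive characterization: $N(g) \in \mathrm{L}^d_{n-1}$ iff each partial derivative $\partial_i N(g) \in \mathrm{L}^{d-1}_{n-1}$. Two identities are central. First, $N(\partial_i h) = \partial_i(x_i \partial_i N(h))$, verified by a coefficient comparison, which shows that $\partial_i f$ is denormalized Lorentzian of degree $d - 1$: the right-hand side is a composition of operations (differentiation, multiplication by a variable, differentiation) preserving the Lorentzian class. Second, the chain rule yields $\partial_i g = (\partial_i f)|_{x_2 = x_1}$ for $i \geq 3$ and $\partial_1 g = (\partial_1 f + \partial_2 f)|_{x_2 = x_1}$. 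The inductive hypothesis applied to the degree-$(d-1)$ denormalized Lorentzian $\partial_i f$ immediately handles the cases $i \geq 3$. The main obstacle is the $i = 1$ case: sums of denormalized Lorentzian polynomials need not be denormalized Lorentzian, so one cannot blindly apply the inductive hypothesis to $\partial_1 f + \partial_2 f$. I expect this to require a more delicate argument exploiting the specific structure of the directional-derivative combination along the collapsed direction---most likely by a base-case-style Hessian calculation applied to each iterated $(d{-}2)$-fold derivative of $N(g)$, reducing the whole verification to repeated instances of the degree-$2$ linear algebra argument above.
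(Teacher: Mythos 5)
Your base case is fine: the identity $H' = PHP^{\mathsf T} - a_{12}E_{11}$ is correct, $PHP^{\mathsf T}$ has at most one positive eigenvalue because $P^{\mathsf T}$ is injective (this is Sylvester inertia/congruence rather than Cauchy interlacing, but the conclusion stands), and subtracting the PSD matrix $a_{12}E_{11}$ cannot create a second positive eigenvalue. The support claim (image of an $\mathrm M$-convex set under merging two coordinates is $\mathrm M$-convex) is also correct. The problem is the inductive step, and it is a genuine gap, not a deferrable detail. First, there is a mismatch between what your induction hypothesis delivers and what the recursive definition requires: applying the lemma in degree $d-1$ to $\partial_i f$ gives that $\partial_i g$ is \emph{denormalized} Lorentzian, i.e.\ $N(\partial_i g)\in\mathrm L^{d-1}$, whereas membership $N(g)\in\mathrm L^d$ requires $\partial_i N(g)\in\mathrm L^{d-1}$. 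By your own identity these differ: $N(\partial_i g)=\partial_i\bigl(x_i\,\partial_i N(g)\bigr)$, and no argument is given to pass from Lorentzianity of the former to Lorentzianity of the latter, so even the cases $i\ge 3$ are not closed. Second, the case you yourself flag as the obstacle ($\partial_1 g=(\partial_1 f+\partial_2 f)|_{x_2=x_1}$) is the actual heart of the lemma, and the proposed fix---``a base-case-style Hessian calculation on each $(d-2)$-fold derivative of $N(g)$''---does not reduce to the $d=2$ computation. If $c'$ denotes the coefficients of $g$ and $c$ those of $f$, then for $|\gamma|=d-2$ the Hessian of $\partial^{\gamma}N(g)$ has entries $c'_{\gamma+e_i+e_j}=\sum_{a+b}c_{(a,b,\ldots)+e_i+e_j}$, a sum over all splittings of $\gamma_1$ of merged Hessians of $N(f)$ (with overcounting corrections in the merged row and column). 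Matrices with at most one positive eigenvalue are not closed under such sums, so ``at most one positive eigenvalue'' for each $\partial^{\alpha}N(f)$ does not formally yield it for $\partial^{\gamma}N(g)$; the extra structure that makes the statement true is exactly what remains to be supplied. (Also, your step ``multiplication by a variable preserves the Lorentzian class'' is true but rests on the nontrivial product theorem of Br\"and\'en--Huh, which you should cite explicitly.)

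For calibration: the present paper does not prove this lemma at all---it is quoted from \cite[Lemma 4.8]{pak}---so there is no in-paper argument to compare against, but the standard proof avoids your obstruction entirely by polarizing. One shows that $f$ is denormalized Lorentzian if and only if its polarization (a multi-affine polynomial in an enlarged variable set) is Lorentzian; identifying $x_2$ with $x_1$ then becomes a nonnegative linear substitution at the polarized level, handled by Theorem~\ref{2.10}, followed by a projection back, each of which preserves the Lorentzian property. Since normalization is built into the polarization--projection correspondence, the non-commutativity of $N$ with substitution---the difficulty your induction runs into---never arises. I would recommend either completing your argument by proving the needed spectral statement for the summed Hessians directly (nontrivial), or replacing the induction with the polarization route.
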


 \section{A multivariate  generalization of  Crowell's Alexander polynomial} \label{Sec:MPoly}
 
Theorem \ref{Thm:CrowellModel} reveals the possibility of a multivariate generalization of the Alexander polynomial: instead of assigning weights $-t$ and $1$ to the edges, we can assign a different weight/variable to each of the edges of $\overrightarrow{\mathcal{G}(L)}$. Our  goal is to make a Lorentzian generalization of the Alexander polynomial in such a way that the (denormalized) Lorentzian property carries over to the homogenized Alexander polynomial $\Delta_{L_G}(-t)$ for any planar bipartite graph $G$. This, in turn, would imply the log-concavity of the coefficients of 
 $\Delta_{L_G}(-t)$.
 
 \medskip

 Assigning all different variables to the edges of $\overrightarrow{\mathcal{G}(L)}$ does not give us an $M$-convex support, needed for the Lorentzian property; however, a different approach does. We note that  for special alternating links $L$, the oriented graph $\overrightarrow{\mathcal{G}(L)}$ is a planar alternating dimap. Moreover, we prove a particularly special distribution of the weights $-t$ and $1$ with respect to the regions of 
 $\overrightarrow{\mathcal{G}(L)}$ in Lemma \ref{inspire} below.
 
  \medskip
  
 Recall that  an \textbf{alternating dimap} $D$  is a planar Eulerian digraph  oriented so that the edges around each vertex are directed alternately into and out of that vertex. Any alternating dimap $D$ is two face colorable.  The edges surrounding faces in one color class are clockwise oriented cycles, and the edges surrounding the other faces are counterclockwise oriented cycles.  Throughout this paper, we will use the terms ``faces" and ``regions" of a dimap $D$ interchangeably. 
 
 \begin{lemma} \label{inspire} Let $G$ be a planar bipartite graph. Recall that $\overrightarrow{\mathcal{G}(L_G)}$ is the alternating dimap obtained by flattening the crossings of $L_G$ with the orientation and edge labeling given by Crowell as in Theorem \ref{Thm:CrowellModel}.  Suppose $R$ is the set of all regions of $\overrightarrow{\mathcal{G}(L_G)}$ whose boundaries are clockwise oriented cycles.  Then, $R$ is either precisely the set of regions associated to vertices of $G$ or the set of regions associated to vertices in its planar dual $G^*$.  Furthermore, the boundary of every face in $R$ is either labeled with a $1$ on every edge or with a $-t$ on every edge.  In this way, we obtain a bipartition of the set $R$.  
\end{lemma}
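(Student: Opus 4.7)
My plan has two main stages matching the assertions of the lemma, followed by identifying the main obstacle.

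\textbf{Stage 1 (identifying $R$).} I would first verify that $\overrightarrow{\mathcal{G}(L_G)}$ is indeed an alternating dimap. At each crossing, Crowell's convention orients both understrand edges into the vertex and both overstrand edges out of the vertex. Since every arc of the diagram is an understrand at one of its two endpoint-crossings and an overstrand at the other (by the alternating property of $L_G$), each arc receives a consistent single orientation, and at each vertex one obtains an alternating $2$-in/$2$-out pattern. As recalled in the excerpt, alternating dimaps are $2$-face-colorable with the two color classes giving precisely the clockwise-oriented and counter-clockwise-oriented boundary cycles. The planar medial graph $M(G)$ carries a natural $2$-face-coloring: one class consists of the regions containing vertices of $G$, the other of regions containing vertices of $G^*$. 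Hence $R$ (the set of CW regions) coincides with exactly one of these two sets.

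\textbf{Stage 2 (constant labels and bipartition).} Fix a face $F \in R$. Traversing the boundary of $F$ in its CW direction, at each crossing $c$ we arrive via an edge whose head is $c$---i.e., whose understrand-endpoint is $c$---so its Crowell label is assigned at $c$ by the rule: $1$ to the understrand edge on the left of the overstrand direction, $-t$ to the one on the right. The central claim is that $F$ lies on the same side (left or right) of the overstrand at every crossing $c$ on its boundary. For $L_G$ positive, the positive crossing condition (rotating the understrand direction $90^\circ$ CCW yields the overstrand direction), together with the rigid medial-graph structure, forces the overstrand orientations at consecutive boundary-crossings of $F$ to be coherent, so that $F$ sits on one fixed side throughout. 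Every boundary edge of $F$ therefore receives the same label, and partitioning $R$ by these constant labels ($1$ vs.\ $-t$) yields the bipartition claimed in the lemma.

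\textbf{Main obstacle.} The crux is the consistency claim in Stage 2. I expect to prove it via a local case analysis at each crossing $c_e$ (for $e = bw$), using that the positive crossing condition combined with the bipartite structure of $G$ forces the overstrand at $c_e$ to separate the vertex-regions associated to $b$ and $w$ in a globally consistent way. A potentially cleaner alternative would be to identify the strands of $L_G$ with canonically oriented transverse cycles of $M(G)$ coming directly from the face structure of $G$, so that the overstrand direction at each crossing can be read off uniformly from this global description.
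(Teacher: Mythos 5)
Your Stage 1 is sound and is essentially the closing paragraph of the paper's own proof: once $\overrightarrow{\mathcal{G}(L_G)}$ is verified to be an alternating dimap, the clockwise and counterclockwise faces form a proper $2$-face-coloring, and since the faces of the medial graph are also properly $2$-colored by vertex-regions of $G$ and regions of $G^*$ (and a proper $2$-coloring of the faces of a connected plane graph is unique up to swapping the colors), $R$ must be one of these two classes. Your reduction in Stage 2 is also the right one: each boundary edge of a clockwise face $F$ is labeled at its head, where it is an understrand edge, and its label is $1$ or $-t$ according to the side of the overstrand on which $F$ sits, so constancy of the side implies constancy of the label.

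The genuine gap is that the consistency claim itself---that $F$ lies on the same side of the overstrand at every crossing along its boundary---is exactly the substance of the lemma, and you do not prove it; you explicitly defer it as the ``main obstacle'' and only sketch two candidate strategies. The paper's proof consists precisely of the local case analysis you postpone: it records the oriented, labeled picture at one positive crossing, then follows the strand to the adjacent crossing, where alternation forces the former overstrand to pass under and positivity again pins down the picture, and iterating this around a face shows that one clockwise face is bounded by all $-t$'s and the neighboring clockwise face by all $1$'s, the pattern being identical at every vertex. So your proposal is the paper's argument with its decisive step asserted rather than executed. Two cautions if you carry it out: first, your stated sign convention (understrand rotated $90^\circ$ counterclockwise gives the overstrand) is the mirror of the convention in the paper's figures, where at a positive crossing the understrand points north and the overstrand east; with your convention the same analysis would yield constant labels on the \emph{counterclockwise} faces, which is the situation of Remark \ref{neglabel} for negative special alternating links rather than the statement of Lemma \ref{inspire}. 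Second, bipartiteness of $G$ plays no role in the propagation step---only alternation and positivity of all crossings are used there---so building the case analysis around the edge $e=bw$ of $G$ is an unnecessary complication; bipartiteness and planarity enter only through the medial construction and the vertex/face dichotomy of Stage 1.
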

\begin{proof}
    Using the fact that $L_G$ is alternating and has only positive crossings, the edges incident to any fixed vertex $v$ of $\overrightarrow{\mathcal{G}(L_G)}$ will be oriented and labeled as shown. \\
    \begin{center}\begin{tikzpicture}
        \draw[black, thick] (1,0) -- (1,.9);
\draw[-stealth,black, thick] (1,1.1) -- (1,2);
\draw[-stealth,black, thick] (0,1) -- (2,1);
\draw[-stealth,black,thick] (5,1)--(4,1);
\draw[-stealth,black,thick] (5,1)--(6,1);
\draw[-stealth,black,thick] (5,0) --(5,1);
\draw[-stealth,black,thick] (5,2) --(5,1);
\node at (3,1.2) {becomes};
\node[scale=.7] at (4.1,.8) {$1$};
\node[scale=.7] at (5.9,.8) {$-t$};
\node[scale=.7] at (5.2,1.9) {$1$};
\node[scale=.7] at (4.7,.1) {$-t$};
\node at (5.5,.5) {$F_1$};
\node at (4.5,1.5) {$F_2$};
\node at (4.5,.5) {$F_3$};
\node at (5.5,1.5) {$F_4$};
    \end{tikzpicture} \end{center}
    Following along the bottom strand, the next crossing (and the corresponding vertex in $\overrightarrow{\mathcal{G}(L_G)}$) will have the following form.
 \begin{center} \begin{tikzpicture}
        \draw[stealth-,black, thick] (0,1) -- (.9,1);
\draw[black, thick] (1.1,1) -- (2,1);
\draw[black,thick] (1,0) -- (1,1.9);
\draw[-stealth,black, thick] (1,2.1) -- (1,3);
\draw[-stealth,black, thick] (0,2) -- (2,2);
\draw[stealth-,black,thick] (5,1)--(4,1);
\draw[stealth-,black,thick] (5,1)--(6,1);
\draw[stealth-,black,thick] (5,0) --(5,1);
\draw[stealth-,black,thick] (5,2) --(5,1);
\node at (3,1.2) {becomes};
\node[scale=.7] at (4.1,.8) {$1$};
\node[scale=.7] at (5.9,.8) {$-t$};
\node[scale=.7] at (5.2,1.6) {$-t$};
\node[scale=.7] at (4.7,.1) {$1$};
\node at (4.4,1.5) {$F_3$};
\node at (5.7,1.5) {$F_1$};
\draw[-stealth,black,thick] (5,3) --(5,2);
\draw[-stealth,black,thick] (5,2)--(4,2);
\draw[-stealth,black,thick] (5,2)--(6,2);
\node at (4.4,2.5) {$F_2$};
\node at (5.6,2.5) {$F_4$};
\node[scale=.7] at (4.1,1.8) {$1$};
\node[scale=.7] at (5.9,1.8) {$-t$};
\node[scale=.7] at (5.2,2.8) {$1$};
    \end{tikzpicture} \end{center}
Repeating this for the crossing to the right of the previous one and all the other crossings on the boundary of the region $F_1$, we see that $F_1$ is bounded by a clockwise oriented cycle in which every edge is labeled with $-t$.  Similarly, we can show that $F_2$ is bounded by a clockwise oriented cycle in which every edge is labeled with $1$.  \\
In particular, observe that $F_1, F_2 \in R$ and $F_3, F_4 \notin R$.  Since this pattern is the same at any vertex of $\overrightarrow{\mathcal{G}(L_G)}$, all regions in $R$ will have either the labeling of $F_1$ (with all $-t$'s) or the labeling of $F_2$ (with all $1$'s).  \\
Furthermore, $R$ and $R'$, the set of faces not in $R$, will form a proper $2$-coloring of the faces of $\overrightarrow{\mathcal{G}(L_G)}$.  By construction, one of the sets $R$ or $R'$ will be the regions corresponding to the vertices of $G$, and the other will be the regions corresponding to the vertices of $G^*$. 
\end{proof}

See Figure \ref{fig:CrowellEx} for a full example of the edge labeling on a positive special alternating link.

\begin{remark}
\label{neglabel}
    If $L_G^{\text{neg}}$ is any negative special oriented link, then Lemma \ref{inspire} holds with ``clockwise" replaced by ``counterclockwise." 
\end{remark}

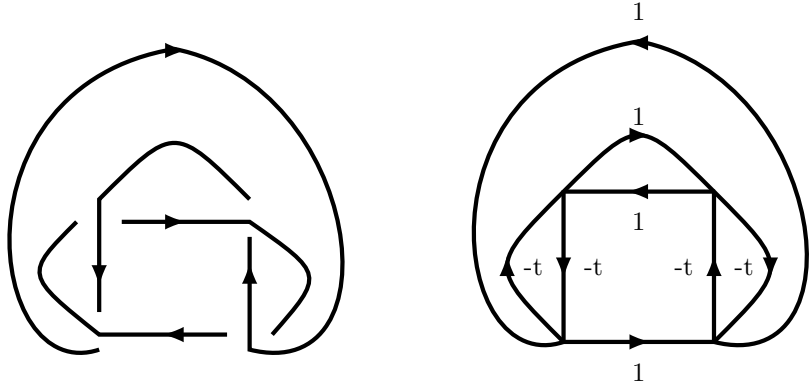
\begin{figure}
    \centering
    \scalebox{2}{\begin{tikzpicture}
\draw[black,thick] (0,.25) -- (0,1) .. controls (.5,1.5).. (1,1);
\draw[black,thick] (1,.75) -- (1,0) .. controls (2,-.25) and (1.75,1.75) .. (.5,2) .. controls (-1,1.75) and (-.75,-.25) .. (0,0);
\draw[black,thick] (.85,.1) -- (0,.1).. controls (-.5,.5) .. (-.15,.85);
\draw[black,thick] (.15,.85)-- (1,.85)..controls(1.5,.5) .. (1.15,.1);
\draw[->,>={LaTeX[]},draw opacity=0] (.4,1.99)--(.6,1.99);
\draw[->,>={LaTeX[]},draw opacity=0] (.4,.85)--(.6,.85);
\draw[->,>={LaTeX[]},draw opacity=0] (.6,.1)--(.4,.1);
\draw[->,>={LaTeX[]},draw opacity=0] (0,.6)--(0,.4);
\draw[->,>={LaTeX[]},draw opacity=0] (1,.4)--(1,.6);
\end{tikzpicture}}
\scalebox{2}{\begin{tikzpicture}
\draw[black,thick] (0,0) -- (0,1) .. controls (.5,1.5).. (1,1);
\draw[black,thick] (1,1) -- (1,0) .. controls (2,-.25) and (1.75,1.75) .. (.5,2) .. controls (-1,1.75) and (-.75,-.25) .. (0,0);
\draw[black,thick] (1,0) -- (0,0).. controls (-.5,.5) .. (0,1);
\draw[black,thick] (0,1)-- (1,1)..controls(1.5,.5) .. (1,0);
\draw[->,>={LaTeX[]},draw opacity=0] (.6,1.99)--(.4,1.99);
\draw[->,>={LaTeX[]},draw opacity=0] (.6,1)--(.4,1);
\draw[->,>={LaTeX[]},draw opacity=0] (.4,0)--(.6,0);
\draw[->,>={LaTeX[]},draw opacity=0] (0,.6)--(0,.4);
\draw[->,>={LaTeX[]},draw opacity=0] (1,.4)--(1,.6);
\draw[->,>={LaTeX[]},draw opacity=0] (-.375,.4)--(-.375,.6);
\draw[->,>={LaTeX[]},draw opacity=0] (1.375,.6)--(1.375,.4);
\draw[->,>={LaTeX[]},draw opacity=0] (.4,1.375)--(.6,1.375);
\node[scale=.5] at (.5,-.2) {1};
\node[scale=.5] at (.5,.8) {1};
\node[scale=.5] at (.5,1.5) {1};
\node[scale=.5] at (.5,2.2) {1};
\node[scale=.5] at (.2,.5) {-t};
\node[scale=.5] at (.8,.5) {-t};
\node[scale=.5] at (-.2,.5) {-t};
\node[scale=.5] at (1.2,.5) {-t};
\end{tikzpicture}}
    \caption{A positive special alternating link $L$ on the left and the associated dimap $\protect\overrightarrow{\mathcal{G}(L)}$ with orientations and labeling from Crowell's model on the right.  Note that in this example, the set $R$ of clockwise oriented regions includes the exterior of $\protect\overrightarrow{\mathcal{G}(L)}$.}
    \label{fig:CrowellEx}
\end{figure}

In light of Lemma \ref{inspire}, we consider the following generalization of the Alexander polynomial  $\Delta_{L_G}(-t)$. We define a multivariate polynomial  for all alternating dimaps $D$ and show that when we take $D=\overrightarrow{\mathcal{G}(L_G)}$ and specialize this polynomial, we get  $\Delta_{L_G}(-t)$.  

 \begin{definition}\label{var}   Denote the set of clockwise oriented regions of the alternating dimap $D$ by $R(D)$.  Let $R(D)=\{R_1, \ldots, R_k\}$. Each edge $e \in E(D)$ belongs to the boundary of exactly one region in $R(D)$. Assign a variable $var(e)=x_i$ to each edge $e$ where $e$ is in the boundary of $R_i$, $i \in [k]$.  See Figure \ref{fig:posspec} for an example. 
\end{definition}

\begin{figure}
\centering
\includegraphics[height=6cm]{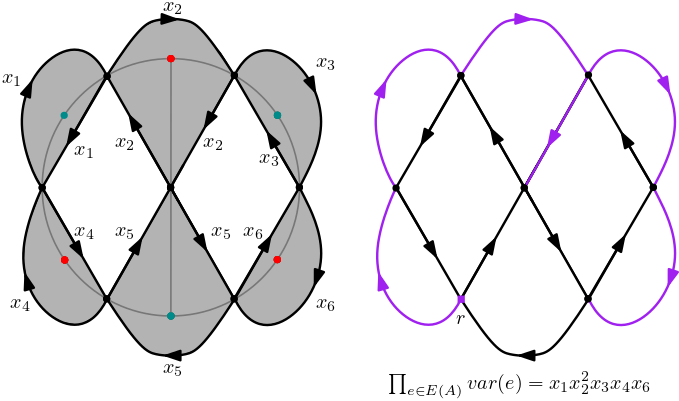}
\caption{The left shows the graph $\protect\overrightarrow{\mathcal{G}(L_G)}$ for the positive special alternating link constructed in Figure \protect\ref{fig:bipandlink} with edge variables as in Definition \protect\ref{var}. The right shows an arborescence rooted at $r$ and the monomial associated to it.}
\label{fig:posspec}
\end{figure}

\begin{definition}\label{mpoly} Let $D$ be an alternating dimap, and define $var(e)$, $e \in E(D)$, as in Definition \ref{var}. Fix a vertex $r\in V(D)$. Define \begin{equation} M_{D,r}(x_1, \ldots, x_k)=\sum_{A} \prod_{e\in E(A)} var(e) \end{equation}
where the sum is over all arborescences $A$ of $D$ rooted at $r$. 
\end{definition}

We call the multivariate polynomial $M_{D,r}$ the \textbf{$M$-polynomial} of the dimap $D$, as we devised it so that its support would be $M$-convex. We will prove this and other important properties in Section \ref{Sec:results}.  We will also see that  $M_{D,r}(x_1, \ldots, x_k)$ does not depend on the choice of root $r$ but only on the dimap $D$ (Theorem \ref{thm:denorm}), and for this reason, we denote it  simply by $M_{D}({\bf x})$ for the rest of this section. The polynomial we term the $M$-polynomial appeared as a determinant in works by  Juh\'asz, K\'alm\'an, and Rasmussen \cite{jkr} and by K\'alm\'an \cite{tamas} with a different, but closely related, prelude.

 \begin{theorem}\label{thm:spec} Let $G$ be a planar bipartite graph, and assign $\overrightarrow{\mathcal{G}(L_G)}$ the orientation and labeling from Crowell's model as described in Section \ref{Sec:background}. Let $R(\overrightarrow{\mathcal{G}(L_G)})_1=\{R_1, \ldots, R_l\}$ and $R(\overrightarrow{\mathcal{G}(L_G)})_2=\{R_{l+1}, \ldots, R_k\}$ be the clockwise oriented regions of $\overrightarrow{\mathcal{G}(L_G)}$ labeled with $-t$'s and $1$'s respectively. Then, 
 
\begin{equation} \label{spec} \Delta_{L_G}(-t)=M_{\overrightarrow{\mathcal{G}(L_G)}}(t, \ldots,t, 1,\ldots, 1) \end{equation} 
 where we set $x_1=\cdots=x_l=t$ and $x_{l+1}=\cdots=x_k=1$ in the $M$-polynomial $M_{\overrightarrow{\mathcal{G}(L_G)}}.$ 
 
 Similarly, 
 \begin{equation} \label{qt} {\rm Homog}_q(\Delta_{L_G}(-t))=M_{\overrightarrow{\mathcal{G}(L_G)}}(t, \ldots,t, q,\ldots, q), \end{equation} 
 where ${\rm Homog}_q(\Delta_{L_G}(-t))$ denotes the $q$-homogenization of $\Delta_{L_G}(-t)$ and  we set $x_1=\cdots=x_l=t$ and $x_{l+1}=\cdots=x_k=q$ in the $M$-polynomial $M_{\overrightarrow{\mathcal{G}(L_G)}}.$

 \end{theorem}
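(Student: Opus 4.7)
The plan is to prove both identities by matching, arborescence by arborescence, the two sums: Crowell's formula for $\Delta_{L_G}(t)$ (after the substitution $t \mapsto -t$) and the expansion of the $M$-polynomial given in Definition \ref{mpoly}. Both are sums over the same set $\mathcal{A}(L_G, r)$ of arborescences in $\overrightarrow{\mathcal{G}(L_G)}$ rooted at any chosen $r$, so the entire task is to compare the edge weight that each side assigns to a given edge of an arborescence.

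First I would apply Theorem \ref{Thm:CrowellModel}: after substituting $-t$ for $t$, each edge $e$ contributes either $t$ (if Crowell's label was $-t$) or $1$ (if Crowell's label was $1$). Thus
\[
\Delta_{L_G}(-t) = \sum_{A \in \mathcal{A}(L_G,r)} \prod_{e \in E(A)} w(e),
\]
where $w(e) = t$ on edges whose Crowell label is $-t$ and $w(e) = 1$ otherwise. Next I would invoke Lemma \ref{inspire}: every clockwise oriented region is either all $-t$-labeled or all $1$-labeled under Crowell's labeling, and this partitions the set of clockwise regions into $\{R_1,\ldots,R_l\}$ (the $-t$ regions) and $\{R_{l+1},\ldots,R_k\}$ (the $1$ regions). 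Since every edge of $\overrightarrow{\mathcal{G}(L_G)}$ belongs to the boundary of exactly one clockwise region (by Definition \ref{var}), the Crowell label of an edge $e$ is $-t$ if and only if $var(e) = x_i$ for some $i \le l$, and is $1$ if and only if $var(e) = x_i$ for some $i > l$. Therefore, substituting $x_1 = \cdots = x_l = t$ and $x_{l+1} = \cdots = x_k = 1$ in $M_{\overrightarrow{\mathcal{G}(L_G)},r}$ yields exactly $w(e) = t$ on the first type of edge and $w(e) = 1$ on the second, giving equation (\ref{spec}) term by term.

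For the $q$-homogenized identity (\ref{qt}), I would note that $M_{\overrightarrow{\mathcal{G}(L_G)},r}(\mathbf{x})$ is homogeneous of degree $|V(\overrightarrow{\mathcal{G}(L_G)})| - 1$, since every arborescence has exactly that many edges. Hence $M_{\overrightarrow{\mathcal{G}(L_G)},r}(t,\ldots,t,q,\ldots,q)$ is bihomogeneous in $(t,q)$ of total degree $|V|-1$, and setting $q = 1$ recovers the left-hand side of (\ref{spec}). Since $\Delta_{L_G}(-t)$ has degree at most $|V|-1$ in $t$, its $q$-homogenization is obtained by multiplying each monomial $t^j$ by $q^{|V|-1-j}$; this is precisely what the substitution $x_1 = \cdots = x_l = t$, $x_{l+1} = \cdots = x_k = q$ accomplishes in the $M$-polynomial expansion, by the same term-by-term matching.

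There is no real obstacle: everything has been set up by Crowell's theorem and Lemma \ref{inspire}. The only conceptual point to be careful about is that the $M$-polynomial is written as $M_{\overrightarrow{\mathcal{G}(L_G)}}(\mathbf{x})$ without a chosen root, while Definition \ref{mpoly} takes an arborescence root $r$ as part of the data. The proof proceeds for any fixed $r$, and independence of $r$ is justified separately via Theorem \ref{thm:denorm} (or more directly, by Theorem \ref{Thm:CrowellModel} which already guarantees the sum of arborescence weights does not depend on the root).
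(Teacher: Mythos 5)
Your proposal is correct and follows exactly the paper's route: the paper proves Theorem \ref{thm:spec} as an immediate consequence of Theorem \ref{Thm:CrowellModel}, Lemma \ref{inspire}, and Definition \ref{mpoly}, which is precisely the term-by-term matching of arborescence weights you spell out (including the observation that the $q$-substitution produces the homogenization because the $M$-polynomial is homogeneous of degree $|V|-1$).
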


\proof This is an immediate corollary of Theorem \ref{Thm:CrowellModel}, Lemma \ref{inspire}, and Definition \ref{mpoly}. \qed

\medskip

\begin{remark}
\label{Rk:negspecialization} For any digraph $D$, let the \textbf{transpose} of $D$, denoted $D^T$, be the digraph obtained from $D$ by reversing the orientation of each edge. 
    By Remark \ref{neglabel}, $M_{{\overrightarrow{\mathcal{G}(L_G^{\text{neg}})}}^T}(x_1,\ldots,x_k)$ specializes to $\Delta_{L_G^{\text{neg}}}(-t)$ for any negative special alternating link $L_G^{\text{neg}}$ in the same way as above.
\end{remark}

The specialization taken on the right-hand side of \eqref{qt} has a discrete geometric meaning. Given  any polynomial  $M(x_1, \ldots, x_k)=\sum_{(p_1, \ldots, p_k)\in \mathbb{Z}^k} c_{p_1, \ldots, p_k}\prod_{i=1}^k x_i^{p_i}  \in \mathbb{Z}[x_1, \ldots, x_k]$, 
we can equivalently represent  it as a \textbf{labeled lattice} $\mathbb{Z}^k$ of the polynomial  $M(x_1, \ldots, x_k)$: for  each point $(p_1, \ldots, p_k)\in \mathbb{Z}^k$ label it by the coefficient $c_{p_1, \ldots, p_k}$ of $\prod_{i=1}^k x_i^{p_i}$ in $M(x_1, \ldots, x_k).$ All but finitely many points in the labeled lattice $\mathbb{Z}^k$ of  $M(x_1, \ldots, x_k)$ are labeled by $0$, and the convex hull of the integer points with nonzero labels is called the \textbf{Newton polytope} of $M(x_1, \ldots, x_k)$.

Let \begin{equation} \label{tilde} \widetilde{M}(t,q)=M(t, \ldots,t, q,\ldots, q)\end{equation} where  we set $x_1=\cdots=x_l=t$ and $x_{l+1}=\cdots=x_k=q$ in $M(x_1, \ldots, x_k)$. In particular,  ${\rm Homog}_q(\Delta_{L_G}(-t))= \widetilde{M}_{\overrightarrow{\mathcal{G}(L_G)}}(t,q)$  by \eqref{qt}.  We can readily interpret the coefficients of $\widetilde{M}(t,q)$ as follows.

\begin{lemma} \label{l} The coefficient of  $t^{m}q^{n}$ in $\widetilde{M}(t,q)$, for any fixed $m, n \in \mathbb{Z}_{\geq 0}$, is equal to the sum of the labels -- in the labeled lattice $\mathbb{Z}^k$ of  $M(x_1, \ldots, x_k)$ -- of the integer points in the intersection of the Newton polytope of $M(x_1, \ldots, x_k)$ with the hyperplanes $p_1+\cdots+p_l=m$ and $p_{l+1}+\cdots+p_k=n$. 
\end{lemma}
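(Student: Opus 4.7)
The plan is to unwind the definitions: this lemma is essentially a bookkeeping statement about what happens when we substitute $x_1=\cdots=x_l = t$ and $x_{l+1}=\cdots=x_k=q$ into a polynomial, so the proof should amount to rearranging a sum and identifying it with a set of lattice points.

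First I would expand $M(x_1,\ldots,x_k) = \sum_{(p_1,\ldots,p_k)\in \mathbb{Z}^k} c_{p_1,\ldots,p_k}\prod_{i=1}^k x_i^{p_i}$ and perform the substitution defining $\widetilde M(t,q)$ to obtain
\[
\widetilde{M}(t,q) = \sum_{(p_1,\ldots,p_k)\in \mathbb{Z}^k} c_{p_1,\ldots,p_k}\, t^{p_1+\cdots+p_l}\, q^{p_{l+1}+\cdots+p_k}.
\]
Then I would regroup terms by the values of $m = p_1+\cdots+p_l$ and $n = p_{l+1}+\cdots+p_k$, yielding that the coefficient of $t^m q^n$ equals
\[
\sum_{\substack{p_1+\cdots+p_l = m\\ p_{l+1}+\cdots+p_k = n}} c_{p_1,\ldots,p_k}.
\]

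Next I would translate this algebraic sum into the geometric statement. The indexing set above is precisely the set of lattice points lying in the intersection of the two affine hyperplanes $H_1=\{p_1+\cdots+p_l = m\}$ and $H_2 = \{p_{l+1}+\cdots+p_k = n\}$. Lattice points outside the support of $M$ contribute $c_{p_1,\ldots,p_k}=0$, so we may freely enlarge the indexing set to include all integer points in the Newton polytope of $M$ meeting $H_1 \cap H_2$ without changing the sum. This gives exactly the claimed expression in terms of labels on the Newton polytope.

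There is no real obstacle here; the statement is essentially a reformulation of the definition of substitution, together with the observation that integer points of the Newton polytope not in the support carry label zero. The only mild care needed is to note that $c_{p_1,\ldots,p_k}=0$ whenever $(p_1,\ldots,p_k)$ lies outside the Newton polytope, so that summing over lattice points in the polytope (as opposed to over all of $\mathbb{Z}^k$, or over only the support) gives the same answer.
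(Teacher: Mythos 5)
Your proof is correct and is essentially the same as the paper's: both simply unwind the substitution $x_1=\cdots=x_l=t$, $x_{l+1}=\cdots=x_k=q$, regroup the monomials by the values of $p_1+\cdots+p_l$ and $p_{l+1}+\cdots+p_k$, and note that points outside the support carry coefficient zero. Your version just spells out the bookkeeping a bit more explicitly than the paper does.
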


\proof 
 By definition, the coefficient of $t^{m}q^{n}$ in $\widetilde{M}(t,q)$, for any fixed $m, n \in \mathbb{Z}_{\geq 0}$, is equal to the sum of the coefficients of the monomials $\prod_{i=1}^k x_i^{p_i}$ of $M(x_1, \ldots, x_k)$ whose exponents  lie in the hyperplanes $p_1+\cdots+p_l=m$ and $p_{l+1}+\cdots+p_k=n$. \qed

 \begin{corollary} \label{cor} If the support of $M(x_1, \ldots, x_k)$ is an $M$-convex set, then the sequence of coefficients of $\widetilde{M}(t,q)$  is equal to the sum of the labels -- in the labeled lattice $\mathbb{Z}^k$ of  $M(x_1, \ldots, x_k)$ -- of the integer points in the intersection of the Newton polytope of $M(x_1, \ldots, x_k)$ with parallel hyperplanes of the form $p_1+\cdots+p_l=c$, $c \in \mathbb{Z}_{\geq 0}$.
 \end{corollary}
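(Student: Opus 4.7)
The plan is to deduce the corollary directly from Lemma \ref{l} together with the observation that an $\mathrm{M}$-convex subset $J\subseteq\mathbb{N}^k$ is contained in a single hyperplane of constant coordinate sum. Granting this, the Newton polytope of $M(x_1,\ldots,x_k)$ lies in $\{p_1+\cdots+p_k=d\}$ for some fixed $d\in\mathbb{Z}_{\geq 0}$, and the two slicing conditions $p_1+\cdots+p_l=m$ and $p_{l+1}+\cdots+p_k=n$ appearing in Lemma \ref{l} collapse to one: fixing $p_1+\cdots+p_l=c$ forces $p_{l+1}+\cdots+p_k=d-c$ automatically, while for $n\neq d-m$ the intersection is empty and the coefficient of $t^mq^n$ in $\widetilde{M}(t,q)$ vanishes.

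The one step requiring care is the homogeneity claim that $\mathrm{M}$-convexity forces constant coordinate sum. I would handle this by a short induction. Given $\alpha,\beta\in J$ with $\alpha\neq\beta$, some coordinate $i$ satisfies $\alpha_i>\beta_i$; the exchange axiom then produces $j$ with $\alpha_j<\beta_j$ and $\alpha':=\alpha-e_i+e_j\in J$. Then $\sum_h\alpha'_h=\sum_h\alpha_h$ while $\sum_h|\alpha'_h-\beta_h|=\sum_h|\alpha_h-\beta_h|-2$, so induction on this quantity reduces us to the trivial case $\alpha=\beta$. Alternatively, one may invoke Theorem \ref{3.10}: $N(\sum_{\alpha\in J}\mathbf{x}^\alpha)\in\mathrm{L}^d_k\subseteq\mathrm{H}^d_k$ is by definition homogeneous of degree $d$, so every $\alpha\in J$ satisfies $\sum_h\alpha_h=d$.

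Combining these ingredients, let $S_c$ denote the sum of labels on the lattice points of the Newton polytope of $M$ lying in the slice $\{p_1+\cdots+p_l=c\}$. By Lemma \ref{l} and the homogeneity observation, $\widetilde{M}(t,q)=\sum_{c=0}^{d}S_c\,t^c q^{d-c}$, so $(S_0,S_1,\ldots,S_d)$ is exactly the sequence of coefficients of $\widetilde{M}(t,q)$, as claimed. The whole argument is a quick reindexing once homogeneity is available, so the main substance of the proof lies in the $\mathrm{M}$-convexity $\Rightarrow$ homogeneity step above; no serious obstacle arises beyond cleanly phrasing that short induction.
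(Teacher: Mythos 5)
Your proof is correct and follows the same route as the paper: $\mathrm{M}$-convexity forces the coordinate sum $p_1+\cdots+p_k$ to be constant on the support, so the two slicing conditions of Lemma \ref{l} collapse to the single family of hyperplanes $p_1+\cdots+p_l=c$. The paper simply asserts this constancy, whereas you supply a short exchange-axiom induction for it (just note the harmless WLOG that you may need to swap $\alpha$ and $\beta$ to get a coordinate with $\alpha_i>\beta_i$); otherwise the arguments coincide.
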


\proof  Since the support of $M(x_1, \ldots, x_k)$ is $M$-convex, the sum  $p_1+\cdots+p_k$ is constant on the support of  $M(x_1, \ldots, x_k)$. As such, the statement follows from Lemma \ref{l}. \qed

 \medskip

In the next section, we show that the $M$-polynomial of any alternating dimap $D$ is denormalized Lorentzian. Using Theorem \ref{thm:spec}, this will imply that the sequence of coefficients of $\Delta_{L}(-t)$ is log-concave with no internal zeros when $L$ is a special alternating link. Corollary \ref{cor} will afford us a geometric viewpoint as well.

\section{The $M$-polynomial and the homogenized Alexander polynomial $\Delta_{L_G}(-t)$ are denormalized Lorentzian} \label{Sec:results}

The goal of this section is to prove that the $M$-polynomial of any alternating dimap $D$ is denormalized Lorentzian:

\begin{theorem} \label{thm:denorm} For any alternating dimap $D$, the polynomial $M_{D}({\bf x})=M_{D,r}({\bf x})$ is independent of the choice of root $r \in D$. Moreover, $M_{D}({\bf x})$ is denormalized Lorentzian.
\end{theorem}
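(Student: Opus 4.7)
My plan is to prove Theorem \ref{thm:denorm} in two steps: first the independence of root, and then the denormalized Lorentzian property.

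For independence of root, I would invoke the weighted Matrix--Tree theorem for digraphs. Form the weighted Laplacian $L$ of $D$ with $L_{vv} = \sum_{e:\, v \to u} x_{R(e)}$ and $L_{vu} = -x_{R(e)}$ for each edge $e:v\to u$; by the Matrix--Tree theorem, $M_{D,r}(\mathbf{x}) = \det L[\widehat{r}, \widehat{r}]$. The key observation, a local form of Lemma \ref{inspire}, is that at every vertex $v$ of $D$ the two outgoing and two incoming edges are labeled by precisely the same pair $\{x_R, x_{R'}\}$ of variables attached to the two clockwise regions incident to $v$. Hence every row sum of $L$ is zero, and a standard cofactor argument shows all $(|V|-1)\times(|V|-1)$ principal minors coincide, giving independence of $r$.

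For the denormalized Lorentzian property, the plan is to factor $M_D$ through a finer multiaffine polynomial, apply Theorem \ref{3.10}, and then invoke Lemma \ref{pak}. Let $m = |E(D)|$, assign to each edge $e$ its own variable $y_e$, and set
\[
P(\mathbf{y}) = \sum_{T} \prod_{e \in T} y_e,
\]
where $T$ ranges over arborescences of $D$ rooted at $r$. Identifying, for each clockwise region $R_i$, the variables $y_e$ with $R(e) = R_i$ to the single variable $x_i$ yields $M_D(\mathbf{x})$. Since $P$ is multiaffine, $N(P) = P$, and by Theorem \ref{3.10} the polynomial $P$ is denormalized Lorentzian provided its support
\[
J(D,r) = \{\mathbf{1}_T : T \text{ an arborescence of } D \text{ rooted at } r\} \subseteq \{0,1\}^m
\]
is $M$-convex. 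Once this is established, iterated application of Lemma \ref{pak} (which preserves the denormalized Lorentzian property under variable identifications) gives that $M_D$ is denormalized Lorentzian.

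The main obstacle is the $M$-convexity of $J(D,r)$. For general (even Eulerian) digraphs, arborescences rooted at a fixed vertex do not form an $M$-convex set, so the planar alternating structure of $D$ must enter in an essential way. I would approach this by a local exchange argument: given arborescences $T_1, T_2$ and an edge $e = (u \to v) \in T_1 \setminus T_2$, the in-degree-one condition forces any swap partner to be another edge $f = (u' \to v) \in T_2 \setminus T_1$ with the same head $v$; one must then verify that both $T_1 - e + f$ and $T_2 - f + e$ are arborescences, the only obstruction being the creation of a directed cycle. The alternating cyclic arrangement of in- and out-arcs at each $4$-valent vertex, together with planarity, should rule out such a cycle. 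As a fallback, I would appeal to the works of K\'alm\'an \cite{tamas} and Juh\'asz--K\'alm\'an--Rasmussen \cite{jkr}, where Newton polytopes of such polynomials are realized as root polytopes of the planar bipartite graph $G$; these root polytopes are generalized permutahedra, whose lattice points automatically form an $M$-convex set, and the same circle of ideas should yield $M$-convexity of $J(D,r)$.
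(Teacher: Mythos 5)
Your first step (root-independence) is workable: the balance you observe at each vertex of an alternating dimap is correct — each clockwise region incident to a vertex contributes exactly one incoming and one outgoing edge with the same variable, so the weighted in- and out-degrees agree — and this does force all cofactors of the weighted Laplacian to coincide. (Two small corrections: with your convention, diagonal equal to weighted \emph{out}-degrees, the principal minor counts arborescences oriented \emph{toward} $r$, whereas the paper's arborescences are oriented away from $r$, so you want the in-degree Laplacian; and the row sums of that matrix vanish trivially — what the local balance buys you is the vanishing of the \emph{column} sums, and you need both to conclude that all principal cofactors agree.) The fatal problem is in the second step: the set $J(D,r)$ of edge-indicator vectors of arborescences rooted at $r$ is \emph{not} M-convex for alternating dimaps, so the polynomial $P(\mathbf{y})$ you want to feed into Theorem~\ref{3.10} is not Lorentzian, and Lemma~\ref{pak} never gets off the ground. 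Already the dimap of Figure~\ref{fig:CrowellEx} is a counterexample: it is (up to relabeling) the bidirected square on vertices $a,b,c,d$ with edges $a\to b,\ b\to a,\ b\to c,\ c\to b,\ c\to d,\ d\to c,\ d\to a,\ a\to d$. The arborescences rooted at $a$ are $T_1=\{a\to b,\,b\to c,\,c\to d\}$, $T_2=\{a\to b,\,b\to c,\,a\to d\}$, $T_3=\{a\to b,\,a\to d,\,d\to c\}$, $T_4=\{a\to d,\,d\to c,\,c\to b\}$. Take $\alpha=\mathbb{1}_{T_1}$, $\beta=\mathbb{1}_{T_4}$ and $i$ the coordinate of the edge $c\to d$. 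The admissible exchange partners $j$ are the coordinates of $a\to d$, $d\to c$, $c\to b$; only $j=(a\to d)$ makes $\alpha-e_i+e_j$ the indicator of an arborescence (namely $T_2$), but then $\beta-e_j+e_i$ is the indicator of $\{d\to c,\,c\to b,\,c\to d\}$, which contains the $2$-cycle $c\to d\to c$ and is not an arborescence. So the symmetric exchange axiom fails, exactly in the way your own sketch worried it might; planarity and alternation do not rescue it at the level of individual edges.

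Your fallback also does not close this gap: the root-polytope/hypertree results of K\'alm\'an and Juh\'asz--K\'alm\'an--Rasmussen concern the exponent vectors in the \emph{region} variables $x_1,\dots,x_k$ (the Newton polytope of $M_D$), not the finer edge-indicator set $J(D,r)$, and in the example above M-convexity indeed only appears after projecting $\mathbb{1}_T$ to the vector $(a_1(T),\dots,a_k(T))$ of region degrees. This is precisely how the paper proceeds: it works with the spanning-tree polynomial $f_D$, which is a nonnegative linear specialization of the basis generating polynomial of the graphic matroid and hence Lorentzian by Theorems~\ref{3.10} and~\ref{2.10}, so its support is M-convex; it then shows that $\mathrm{supp}(M_{D,r})=\mathrm{supp}(f_D)$ (every tree degree vector is realized by an arborescence at any root, Lemma~\ref{Lem:TreestoArbs}) and that each degree vector is realized by at most one arborescence (Lemma~\ref{Lem:ArbsUnique}), so $M_{D,r}$ has all coefficients equal to $1$ on an M-convex support independent of $r$, and Theorem~\ref{3.10} applies directly. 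Some substitute for these two combinatorial lemmas — carried out after identifying variables within each clockwise region, not before — is what your argument is missing.
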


In the special case of $\overrightarrow{\mathcal{G}(L)}$ where $L$ is a special alternating link, Theorems \ref{thm:spec} and \ref{thm:denorm} and Lemma \ref{pak} will imply that the homogenized Alexander polynomial ${\rm Homog}_q(\Delta_{L_G}(-t))$ is also denormalized Lorentzian. 

\medskip

We now study the support and coefficients of the $M$-polynomial in order to prove Theorem \ref{thm:denorm}. Lemmas \ref{Lem:TreestoArbs} and  \ref{Lem:ArbsUnique} were first discovered by K\'alm\'an \cite{tamas} and presented as part of his beautiful  proof of  \cite[Theorem 10.1]{tamas}. We include their proofs here for completeness. For  Lemma \ref{Lem:TreestoArbs}, we present our proof which is closely related to K\'alm\'an's. For  Lemma \ref{Lem:ArbsUnique}, we present his original proof, adapted to our conventions.

\subsection{The support of the $M$-polynomial} 
We relate the support of the $M$-polynomial to the integer points of the base polytope of a graphic matroid. Let $\mathcal{T}(D)$ be the set of all spanning trees of the alternating dimap $D$ (spanning trees here are considered without orientation). Let $g_D$ be the integer point enumerator of the  base polytope of the graphic matroid of $D$ considered without orientation. If we let $R_1, \ldots, R_k$ of $D$ be the regions bounded by the clockwise oriented cycles $C_1, \ldots, C_k$ and denote the edges of $C_i$ by $e_{i,1},\ldots,e_{i,|C_i|}$,  $i \in [k]$, then: 
$$g_D(x_{1,1},\ldots,x_{n,|C_k|})=\sum_{T \in \mathcal{T}(D)} \prod_{e_{i,j} \in E(T)} x_{i,j}.$$  

Theorem \ref{3.10} implies that $g_D({\bf x})$ is Lorentzian since all integer points of a matroid base polytope are $0,1$ and form an $M$-convex set. Next, we specialize $g_D({\bf x})$ in a way that  preserves the Lorentzian property:

$$f_D(x_1,\ldots,x_k)=\sum_{T \in \mathcal{T}(D)} \prod_{i=1}^k x_i^{a_i(T)},$$ where $a_i(T)$ is the number of edges of $T$ belonging to the cycle $C_i$, $i \in [k]$.

\begin{lemma} \label{fD} Given an alternating dimap $D$, the polynomial $f_D$ is Lorentzian.  In particular, $f_D$ has $M$-convex support.
\end{lemma}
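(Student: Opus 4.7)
The plan is to deduce $f_D$'s Lorentzianity by exhibiting it as a linear substitution of a fine basis generating polynomial already known to be Lorentzian, and then appealing to Theorem \ref{2.10}.

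First I would note that $g_D$, which is the basis generating polynomial of the graphic matroid of the underlying undirected graph of $D$, is itself a Lorentzian polynomial. Indeed, since the bases of any matroid form an $M$-convex subset of $\{0,1\}^N$ (where $N = \sum_{i=1}^k |C_i|$ is the total number of edges of $D$, noting by Definition \ref{var} that each edge belongs to exactly one clockwise cycle, so the $(i,j)$ indexing is a bijection with $E(D)$), Theorem \ref{3.10} says that $N(g_D)$ is Lorentzian. But every monomial appearing in $g_D$ is squarefree, i.e.\ its exponent vector $\alpha \in \{0,1\}^N$ satisfies $\alpha! = 1$, so in fact $N(g_D) = g_D$, and $g_D$ itself is Lorentzian.

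Next I would observe that $f_D$ arises from $g_D$ by the linear substitution $x_{i,j} \mapsto x_i$. Concretely, let $A$ be the $N \times k$ matrix whose row indexed by $(i,j)$ is the standard basis vector $e_i \in \mathbb{R}^k$; then $A$ has entries in $\{0,1\}$ (in particular, nonnegative), and
\[
f_D(x_1,\ldots,x_k) \;=\; g_D(A\,\mathbf{x}),
\]
since substituting $x_{i,j} = x_i$ in $\prod_{e_{i,j} \in E(T)} x_{i,j}$ produces $\prod_{i=1}^k x_i^{a_i(T)}$, which is the contribution of $T$ to $f_D$. Applying Theorem \ref{2.10} to the Lorentzian polynomial $g_D$ and the nonnegative matrix $A$ yields that $f_D$ is Lorentzian.

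Finally, $M$-convexity of the support of $f_D$ is immediate: every Lorentzian polynomial lies in $\mathrm{M}^d_n$ by definition, so its support is $M$-convex. There is no serious obstacle here; the only subtlety to verify carefully is that each edge of $D$ lies on exactly one clockwise-oriented region (so the pairs $(i,j)$ truly index $E(D)$ without overcounting), which is guaranteed by the two-face-colorability of an alternating dimap together with Definition \ref{var}.
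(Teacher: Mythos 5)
Your proposal is correct and follows essentially the same route as the paper: establish that $g_D$ is Lorentzian via Theorem \ref{3.10} (the paper likewise uses that the matroid basis exponents are $0,1$, which is exactly your observation that $N(g_D)=g_D$), then apply Theorem \ref{2.10} to the nonnegative substitution $x_{i,j}\mapsto x_i$ to conclude $f_D$ is Lorentzian, with $M$-convexity of the support immediate from the definition. Your version simply makes the substitution matrix and the squarefree normalization step explicit.
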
 

\proof
The polynomial $g_D$ is the exponential generating function of the graphic matroid of $D$ (where we consider $D$ without its orientation). Thus, by Theorem \ref{3.10}, the polynomial $g_D$ is Lorentzian. The polynomial $f_D$ is obtained from $g_D$ by a nonnegative linear change of variables; thus, $f_D$ is also Lorentzian by Theorem \ref{2.10}. 
Since $f_D$ is Lorentzian, it must have $M$-convex support.
\qed

\medskip

Next, we show that for any $r \in D$, ${\rm supp}(f_D(x_1,\ldots,x_k))={\rm supp}(M_{D,r}(x_1,\ldots,x_k))$. To do this, we need the following auxiliary lemma:

\begin{lemma}
\label{Lem:TreestoArbs} \cite[see proof of Theorem 10.1]{tamas}.
Let $D$ be an alternating dimap.  Denote the cycles surrounding the clockwise oriented regions by $C_1, \ldots, C_k$.  Let $T$ be any spanning tree in $D$, and fix any $r \in V(D)$.  Let $a_i(T)$ be the number of edges of $T$ in the cycle $C_i$.  Then, there exists an arborescence $A$, rooted at $r$, such that $a_i(A)=a_i(T)$ for all $i \in [k]$. 
\end{lemma}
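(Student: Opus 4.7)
The plan is to prove this by induction on $B(T)$, the number of \emph{bad} edges of $T$, where an edge $e = u\to w$ of $T$ is called bad if, when $T$ is rooted at $r$, the parent of $u$ in $T$ equals $w$---that is, the graph orientation of $e$ disagrees with its tree orientation. Then $T$ is an arborescence rooted at $r$ precisely when $B(T) = 0$, giving the base case. For the inductive step, assuming $B(T) > 0$, I will produce a spanning tree $T'$ with $a_i(T') = a_i(T)$ for all $i$ and $B(T') < B(T)$.

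To construct $T'$, pick a bad edge $e = u\to w$ whose tail $u$ has maximum depth in the rooted tree $T$, let $C_i$ be the clockwise cycle containing $e$, and write $T-e = T_1 \sqcup T_2$ with $r \in T_1$ and $u \in T_2$. Depth-maximality forces every tree edge of $T_2$ to be good. Let $e^* = u^*\to u$ be the unique edge of $C_i$ entering $u$ (the $C_i$-predecessor of $u$), and set $T' := T - e + e^*$. Since $e$ and $e^*$ both lie on $C_i$, the count vector is preserved; since $e^*$'s head coincides with the tail $u$ of $e$, the re-rooting of $T_2$ triggered by the swap involves no nontrivial path, so no tree edge of $T_2$ changes orientation. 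Hence $B(T') = B(T)-1$, provided $T'$ is actually a spanning tree, which reduces to verifying $e^*\notin T$ and $u^*\in T_1$.

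The condition $e^*\notin T$ is forced by depth-maximality: otherwise $u^*$, not being the tree-parent of $u$ (which is $w$), would have to be a tree-child of $u$, and then $e^*$ itself would be a bad edge whose tail lies strictly deeper than $u$, contradicting the choice of $e$. The condition $u^*\in T_1$ is the principal obstacle; the problematic scenario is $u^*\in T_2$, in which case the directed tree-path from $u$ to $u^*$ in $T_2$ (necessarily directed because $T_2$ is bad-free) together with $e^*$ forms a simple directed cycle $\gamma$ in $D$ lying inside $T_2\cup\{e^*\}$. Ruling out this pathological configuration (or finding a compensating exchange when it arises) is the technical heart of the proof and relies on the planar-alternating structure of $D$: at $u$ the edge $e^*$ borders the clockwise face $R_i$, and a careful analysis of the faces enclosed by $\gamma$ together with the alternation of in- and out-edges at each vertex should either force the existence of a bad tree edge strictly deeper than $e$ inside $\gamma$---contradicting depth-maximality---or supply an alternate two-step exchange that still preserves the count vector while decreasing $B$.
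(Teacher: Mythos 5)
Your setup is sound up to the last step: the bad-edge count, the choice of a deepest bad edge $e = u \to w$, the observation that all edges of the detached subtree $T_2$ are then good, the exchange within the same clockwise cycle $C_i$ to preserve the vector $(a_1,\ldots,a_k)$, and the argument that $e^* \notin T$ are all fine. But the proposal has a genuine gap exactly where you flag it: you never prove that $u^* \in T_1$, and you offer no actual argument for the case $u^* \in T_2$, only the hope that planarity ``should either force'' a deeper bad edge ``or supply'' some unspecified two-step exchange. Nothing in your construction rules this case out: the $C_i$-predecessor of $u$ can perfectly well be a descendant of $u$, in which case $T - e + e^*$ is not a spanning tree, and the induction stalls. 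Since this is, as you say, the technical heart, the proof is incomplete as written.

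The paper's proof avoids the problem by not insisting that the subtree be re-attached at $u$. After deleting the offending edge (there, an extra edge of $T$ directed into a carefully chosen vertex $v_1$), one only needs \emph{some} edge of $C_i$ directed from the component containing $r$ into the detached component $V_1$; such an edge always exists because $C_i$ is a directed cycle containing the deleted edge, hence has vertices on both sides of the cut, and any such entering edge is automatically not in $T$. The price is that the detached subtree may be re-rooted at a different vertex, so a simple bad-edge count need not drop; the paper compensates by using a different monotone quantity, the set of ``good'' vertices (vertices whose entire path from $r$ already looks like an arborescence), which can only grow under the exchange and strictly grows over finitely many steps. If you replace your choice of $e^*$ by ``any edge of $C_i$ entering the component of $u$'' and switch your potential from $B(T)$ to something insensitive to re-rooting inside the detached subtree (the paper's good-vertex set works), your argument closes; as it stands, the claimed decrement $B(T')=B(T)-1$ rests on an unproven case.
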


\begin{proof}
Define a subset $V'(T) \subset V(T)=V(G)$ as follows.  Let $r \in V'(T)$ if and only if $T$ contains no edges with final vertex $r$.  For any vertex $v \neq r$, there is a unique (undirected) path from $r$ to $v$ in $T$.  Let $e_v$ be the unique edge on this path with $v$ as an endpoint.  Then $v \in V'(T)$ if and only if all other vertices along this path from $r$ to $v$ in $T$ are in $V'(T)$ and $e_v$ is the only edge in $T$ with final vertex $v$.  We will refer to $V'(T)$ as the \emph{good} vertices of $T$.  

Fix a vertex $v_1$ such that $v_1 \notin V'(T)$ but all other vertices on the unique path between $v_1$ and $r$ in $T$ are good.  In particular, this means that there exists some edge $e_1 \neq e_{v_1}$ in $T$ with final vertex $v_1$.  Note that $e_1$ is in the cycle $C_i$ for a unique $i$.  Let $V_1 \subset V(T)$ be the set of all vertices such that their unique path to $r$ in $T$ passes through $e_1$.  

Observe that both $V_1 \cap V(C_i)$ and $(V(T)-V_1) \cap V(C_i)$ are non-empty (since $v_1$ is not in $V_1$ but the initial vertex of $e_1$ is).  Since $C_i$ is an oriented cycle, we can find an edge $e_2$ of $C_i$ with final vertex in $V_1$ and initial vertex not in $V_1$.  By construction, $e_2$ is not in the tree $T$.  Set $T_1=(V(T),(E(T)-\{e_1\}) \cup \{e_2\})$.  \\
By construction, $T_1$ has the following properties:
\begin{enumerate}
    \item $T_1$ is a spanning tree of $G$
    \item $a_i(T_1)=a_i(T)$ for all $i \in [n]$
    \item $V'(T_1) \supset V'(T)$
\end{enumerate}
If $v_1 \notin V'(T_1)$, then $T_1$ must still contain an edge other than $e_{v_1}$ with final vertex $v_1$, and we can perform the same procedure as above to remove and replace this edge.  Otherwise, we select a new vertex $v_2 \notin V'(T_1)$ such that all other vertices along its unique path to $r$ are good and repeat the process.  This procedure will terminate when all the vertices of our tree are good, i.e. when we have an arborescence.  
\end{proof}

\begin{corollary}
\label{Cor:MPolySupp}
For any $r \in V$, ${\rm supp}(f_D(x_1,\ldots,x_k))={\rm supp}(M_{D,r}(x_1,\ldots,x_k)).$ In particular, \\
\noindent${\rm supp}(M_{D,r}(x_1,\ldots,x_k))$ is $M$-convex.
\end{corollary}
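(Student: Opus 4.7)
The plan is to prove set equality of the two supports by double inclusion, then invoke Lemma \ref{fD} to transfer $M$-convexity from $f_D$ to $M_{D,r}$.

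For the inclusion $\mathrm{supp}(M_{D,r}) \subseteq \mathrm{supp}(f_D)$, I would observe that any arborescence $A$ rooted at $r$ is, after forgetting orientations, a spanning tree of $D$. Moreover, the monomial contributed by $A$ to $M_{D,r}$ is $\prod_{e \in E(A)} var(e) = \prod_{i=1}^{k} x_i^{a_i(A)}$ by Definition \ref{var}, which is exactly the monomial $A$ would contribute to $f_D$ when viewed as an unoriented spanning tree. Hence every monomial in $M_{D,r}$ appears in $f_D$.

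For the reverse inclusion $\mathrm{supp}(f_D) \subseteq \mathrm{supp}(M_{D,r})$, I would apply Lemma \ref{Lem:TreestoArbs} directly. Given any monomial $\prod_{i=1}^k x_i^{a_i(T)}$ in $f_D$ coming from a spanning tree $T$, the lemma provides an arborescence $A$ rooted at $r$ with $a_i(A) = a_i(T)$ for every $i \in [k]$. Then $A$ contributes the same monomial to $M_{D,r}$, so the monomial lies in $\mathrm{supp}(M_{D,r})$. This establishes the equality of supports.

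Finally, since $f_D$ is Lorentzian by Lemma \ref{fD}, its support is $M$-convex. Because $\mathrm{supp}(M_{D,r}) = \mathrm{supp}(f_D)$, the support of $M_{D,r}$ is $M$-convex as well, completing the proof. There is essentially no main obstacle here beyond the citation of Lemma \ref{Lem:TreestoArbs}; the corollary is a bookkeeping consequence of that lemma combined with the trivial observation that arborescences are spanning trees.
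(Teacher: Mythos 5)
Your proof is correct and follows exactly the paper's route: the paper dismisses this corollary as an immediate consequence of Lemmas \ref{fD} and \ref{Lem:TreestoArbs} together with the definitions of $f_D$ and $M_{D,r}$, which is precisely the double-inclusion bookkeeping you spell out. No gaps; your version simply makes explicit what the paper leaves to the reader.
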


\proof This is an immediate consequence of Lemmas \ref{fD} and \ref{Lem:TreestoArbs} and the definitions of $f_D$ and $M_{D,r}$.
\qed

\begin{corollary}
\label{Cor:MpolyMConvex}
The support of  $M_{D,r}(x_1,\ldots,x_n)$ is $M$-convex and  independent of the choice of root $r$.
\end{corollary}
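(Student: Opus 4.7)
The plan is to derive both halves of the corollary directly from Corollary \ref{Cor:MPolySupp}, which already packages essentially all of the work. Recall that the preceding corollary asserts that for every choice of root $r \in V(D)$, one has ${\rm supp}(M_{D,r}(x_1,\ldots,x_k)) = {\rm supp}(f_D(x_1,\ldots,x_k))$, and that this common support is $M$-convex.

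For the $M$-convexity assertion, nothing more is needed: it is literally contained in Corollary \ref{Cor:MPolySupp}, which inherits $M$-convexity from Lemma \ref{fD} (the Lorentzian property of $f_D$), which itself comes from $g_D$ being Lorentzian via Theorems \ref{3.10} and \ref{2.10}.

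For the root-independence, the key observation I would make is that the polynomial $f_D$ is defined entirely in terms of the undirected spanning trees of $D$ together with the clockwise cycle decomposition of $D$; no root appears anywhere in its definition. Since Corollary \ref{Cor:MPolySupp} gives ${\rm supp}(M_{D,r}) = {\rm supp}(f_D)$ for every $r$, the set on the left is the same regardless of which $r$ was chosen, namely the root-independent set ${\rm supp}(f_D)$. Both claims of the corollary are therefore obtained at once.

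There is essentially no obstacle at this step. The real content sits upstream: in the tree-to-arborescence exchange argument of Lemma \ref{Lem:TreestoArbs}, without which the nontrivial inclusion ${\rm supp}(f_D) \subseteq {\rm supp}(M_{D,r})$ would be hard to establish, and in the Lorentzian machinery underlying Lemma \ref{fD}. Once those are in hand, Corollary \ref{Cor:MpolyMConvex} is a one-line bookkeeping consequence, and I would simply state it as such with a pointer back to Corollary \ref{Cor:MPolySupp} and to the root-free definition of $f_D$.
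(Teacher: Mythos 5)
Your argument is correct and matches the paper exactly: the paper also deduces this corollary directly from Corollary \ref{Cor:MPolySupp}, with $M$-convexity inherited from $f_D$ and root-independence coming from the fact that ${\rm supp}(M_{D,r})={\rm supp}(f_D)$ for every $r$, where $f_D$ is defined without reference to a root. Nothing further is needed.
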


\proof Follows from Corollary \ref{Cor:MPolySupp}.
\qed

 \subsection{The coefficients of the $M$-polynomial}

In this section, we see that for any choice of root $r$ in an alternating dimap $D$, the polynomial  $M_{D,r}({\bf x})$ has all $0$ and $1$ coefficients. In particular, since the support of $M_{D,r}({\bf x})$  is independent of the choice of root $r$ by Corollary \ref{Cor:MpolyMConvex}, the polynomial $M_{D,r}({\bf x})$ is also independent of the root $r$, and we are justified in denoting it by $M_D({\bf x})$.

\begin{lemma}
\label{Lem:ArbsUnique} \cite[see proof of Theorem 10.1]{tamas}.
    Let $D$ be an alternating dimap.  Denote the cycles surrounding the clockwise oriented regions by $C_1, \ldots, C_n$.  For any spanning tree $T$ of $D$, let $a_i(T)$ denote the number of edges of $T$ in the cycle $C_i$. Then, for a fixed vertex $r$ of $D$ and a fixed sequence $\{x_1,\ldots,x_n\}$, there exists at most one arboresccence $T$ rooted at $r$ such that $a_i(T)=x_i$ for all $i$.
\end{lemma}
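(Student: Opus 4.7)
The plan is to argue by contradiction. Suppose $A$ and $A'$ are distinct arborescences of $D$ rooted at $r$ with $a_i(A) = a_i(A')$ for every $i \in [n]$, and derive a contradiction from the alternating-dimap structure.

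The first move is to encode local rigidity at each vertex. Because $D$ is an alternating dimap, the edges around any vertex $v$ alternate between in-edges and out-edges in cyclic planar order, so each in-edge $e$ at $v$ sits immediately after a unique ``out-then-in'' corner of $v$, which is precisely a clockwise-face corner belonging to a unique clockwise cycle $C_{i(e)}$. Selecting an in-edge at $v$ therefore amounts to selecting such a corner of $v$, and $a_i(A)$ records how many non-root vertices chose a corner lying on $C_i$. The hypothesis $a_i(A) = a_i(A')$ for all $i$ then says exactly that $\{i(e_A(v))\}_{v\ne r} = \{i(e_{A'}(v))\}_{v\ne r}$ as multisets.

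Next, I would analyze the symmetric difference $A\triangle A'$. Since $A$ and $A'$ are both spanning trees, $A\triangle A'$ decomposes as an undirected graph into a disjoint union of even cycles, each alternating between edges of $A\setminus A'$ and edges of $A'\setminus A$. Pick one such undirected cycle $\gamma$; each of its vertices has two incident $\gamma$-edges whose directedness fits into one of three types (both in, both out, or one of each), and any ``both in'' vertex must lie in the disagreement set $U := \{v\ne r:e_A(v)\ne e_{A'}(v)\}$. Combining the multiset equality of cycle labels around $\gamma$, the forced alternation of $A/A'$-labels along $\gamma$, and the alternating planar structure (specifically the fact that clockwise corners are out-in transitions), I would force $\gamma$ into an impossible arrangement: the compensating pairing between cycle labels contributed by $A\setminus A'$ and by $A'\setminus A$ on $\gamma$ should conflict with the acyclicity of $A \cup A'$, so no such $\gamma$ can exist, giving $A = A'$.

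The main obstacle is this final case analysis around $\gamma$, which requires a careful combination of directional, label, and planarity constraints. A cleaner route, likely the one used by K\'alm\'an in the original proof of \cite[Theorem~10.1]{tamas}, is to write $M_{D,r}(\mathbf{x})$ as a principal minor of a Laplacian-type matrix tailored to alternating dimaps and to observe that the alternating orientations at each vertex eliminate sign cancellations in the determinant expansion; each monomial coefficient is then a nonnegative integer count of arborescences, which a further independent step (possibly comparing the support to the matroid base polytope via Corollary \ref{Cor:MPolySupp}) bounds by $1$.
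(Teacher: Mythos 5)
Your proposal has genuine gaps at the two places where the actual work of the lemma must happen. First, the structural claim that $A\triangle A'$ ``decomposes as an undirected graph into a disjoint union of even cycles, each alternating between edges of $A\setminus A'$ and edges of $A'\setminus A$'' is false for spanning trees and for arborescences: in the symmetric difference each disagreement vertex has in-degree $2$ (one in-edge from each arborescence) but out-degrees are unconstrained, so components need not be cycles at all. (Already for the triangle, two spanning trees can have symmetric difference a path.) Likewise, ``the acyclicity of $A\cup A'$'' is not available -- the union of two distinct spanning trees always contains a cycle. Second, even granting some cycle structure on the disagreements, you never derive the contradiction: you state that the label/direction/planarity constraints ``should conflict,'' and you yourself flag this case analysis as the main obstacle. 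That case analysis \emph{is} the lemma; without it nothing has been proved. The paper's argument replaces your symmetric-difference picture with a different device: starting from an edge $e_1\in T_1\setminus T_2$ into a vertex $v_1$, it alternately produces $f_i\in T_2\setminus T_1$ (the in-edge of $v_i$ in $T_2$) and then, using the hypothesis $a_i(T_1)=a_i(T_2)$ on the clockwise cycle containing $f_i$, a new edge $e_{i+1}\in T_1\setminus T_2$ on that same cycle. When a clockwise cycle repeats, the traversed cycles bound a closed region of the plane; because all of them are clockwise, every $e_i$ crosses the resulting separating curve in one direction and every $f_i$ in the other, so whichever side does not contain $r$ is unreachable in one of the two arborescences -- the contradiction. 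This chaining-plus-separation step is exactly the planar input your sketch leaves unproved.

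Your proposed fallback is also not a proof. Writing $M_{D,r}$ as a principal minor of a Laplacian (the directed Matrix--Tree theorem) only tells you that each coefficient equals the number of arborescences with the given degree profile; asserting that this number is at most $1$ is precisely the statement of Lemma~\ref{Lem:ArbsUnique}, so the route is circular unless you supply an independent argument. Corollary~\ref{Cor:MPolySupp} cannot supply it: it identifies the \emph{support} of $M_{D,r}$ with that of $f_D$, and the coefficients of $f_D$ (counts of spanning trees with a given profile) are in general larger than $1$, so no coefficient bound follows from it. If you want to complete your approach, you must either carry out the planar separation argument (as in \cite{tamas} and the paper) or find a genuinely different mechanism forcing uniqueness; neither the symmetric-difference decomposition nor the determinant reformulation, as stated, does so.
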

\begin{proof}
    Suppose we have two such arborescences $T_1$ and $T_2$.  Since these trees are distinct, there must exist some edge $e_1=(v_0,v_1)$ which is in $T_1$ but not $T_2$.  Without loss of generality, suppose $C_1$ is the cycle containing $e_1$.  Observe that since $v_1$ has an edge pointing towards it in $T_1$, it cannot be equal to the root $r$.  This implies that $T_2$ must also have some unique edge $f_1 \neq e_1$ which is directed into $v_1$ and that $f_1$ is not in $T_1$.  Let $C_2$ be the cycle containing $f_1$.  Since $a_2(T_1)=a_2(T_2)$, there must exist some edge $e_2$ in $C_2$ which is in $T_1$ but not $T_2$.  We will denote the final vertex of $e_2$ by $v_2$.  \\
    Repeating this process, we obtain a sequence of edges $e_1, f_1, e_2, f_2, \ldots$ and a sequence of cycles $C_1, C_2, \ldots$ such that for all $i$, 
    \begin{enumerate}
        \item $e_i$ is in $T_1$ but not $T_2$
        \item $f_i$ is in $T_2$ but not $T_1$
        \item $e_{i}$ and $f_{i-1}$ are in $C_i$
        \item $e_i$ and $f_i$ share the same final vertex $v_i$
    \end{enumerate}
    Let $k$ be the smallest index such that $C_k=C_j$ for some $j < k$.  The interiors of the cycles $C_j,\ldots,C_{k-1}$ now form a cycle.  In particular, $D$ can be divided into two subgraphs, the portion inside this cycle and the portion outside it, which share only the vertices $v_j, \ldots, v_{k-1}$.  Let $D_1$ and $D_2$ respectively denote these subgraphs.  Since $C_j,\ldots,C_{k-1}$ are all clockwise oriented cycles, all of $e_j,\ldots,e_{k-1}$ will be directed out of one subgraph and all of $f_j,\ldots,f_{k-1}$ out of the other.  Suppose without loss of generality that the edges $e_j,\ldots,e_{k-1}$ point into $v_j,\ldots,v_{k-1}$ from $D_1$. \\
    Each of $D_1$ and $D_2$ must contain at least one vertex besides $v_j, \ldots, v_{k-1}$ (otherwise, $\{e_j,\ldots,e_{k-1}\}$ or $\{f_j,\ldots,f_{k-1}\}$ would form a cycle, contradicting the fact that they are edges of trees).  Since $v_j, \ldots, v_{k-1}$ are all the final vertices of some edges $T_1$ and $T_2$, none of them will equal the root $r$.  We can, thus, conclude that $r$ is in precisely one of $D_1$ or $D_2$.  There is, however, no directed path in $T_1$ into a vertex in $V(D_1)-\{v_j,\ldots,v_{k-1}\}$ from a vertex in $V(D_2)-\{v_j,\ldots,v_{k-1}\}$ because the only edges of $T_1$ which are directed into $\{v_j,\ldots,v_{k-1}\}$ are $\{e_j,\ldots,e_{k-1}\}$, all of which point away from $D_1$.  Similarly, there is no directed path in $T_2$ into a vertex in $V(D_2)-\{v_j,\ldots,v_{k-1}\}$ from a vertex in $V(D_1)-\{v_j,\ldots,v_{k-1}\}$.  This gives a contradiction.
\end{proof}

Next, we prove Theorem \ref{thm:denorm}.

\medskip
\noindent \textit{Proof of Theorem \ref{thm:denorm}.} By Corollary \ref{Cor:MpolyMConvex}, the support of $M_{D,r}({\bf x})$ is $M$-convex and  independent of the choice of root $r$. It follows from Lemma \ref{Lem:ArbsUnique} that all coefficients of 
$M_{D,r}({\bf x})$ are $1$ on its support.  Thus, $M_{D,r}({\bf x})$ is independent of the choice of $r$, and we may denote it by $M_D({\bf x})$.  By Theorem \ref{3.10}, we conclude that $M_{D}({\bf x})$ is denormalized Lorentzian. \qed

\begin{corollary} \label{seq} The sequence of coefficients of $\widetilde{M_D}(t,q)$ is the number of integer points in the intersection of the Newton polytope of $M_D(x_1, \ldots, x_k)$ with parallel hyperplanes of the form $p_1+\cdots+p_l=c$, $c \in \mathbb{Z}_{\geq 0}$.
\end{corollary}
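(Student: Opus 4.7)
My plan is to derive Corollary \ref{seq} as a direct consequence of Corollary \ref{cor} together with the structural information about $M_D({\bf x})$ already established in Section \ref{Sec:results}. Corollary \ref{cor} identifies the coefficients of $\widetilde{M}(t,q)$ with a sum of labels on lattice points of the Newton polytope of $M$, provided that the support of $M$ is $M$-convex. That hypothesis applies to $M_D({\bf x})$ by Corollary \ref{Cor:MpolyMConvex}, so the coefficients of $\widetilde{M_D}(t,q)$ equal the sum of labels at the lattice points in each slice $p_1+\cdots+p_l=c$ of the Newton polytope of $M_D$.

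The remaining step is to upgrade ``sum of labels'' to ``number of lattice points.'' For this I would invoke two facts. First, Lemma \ref{Lem:ArbsUnique}, as used in the proof of Theorem \ref{thm:denorm}, shows that every nonzero coefficient of $M_D$ equals $1$, so each label in the Newton polytope is $0$ or $1$. Second, since the support of $M_D$ is $M$-convex, it coincides with the full set of lattice points of its convex hull: any $M$-convex set $J$ satisfies $J = \mathrm{conv}(J) \cap \mathbb{Z}^k$, as $\mathrm{conv}(J)$ is an integral generalized permutohedron. Combining these two observations, every lattice point in the Newton polytope carries label exactly $1$, so the sum of labels in each slice is simply the number of lattice points in that slice.

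There is essentially no obstacle here; the argument is bookkeeping once one notices that $M_D$ has $0/1$ coefficients on an $M$-convex support that is equal to the lattice-point set of its Newton polytope. The single point worth stating explicitly is the identification of an $M$-convex set with the lattice points of its convex hull, so that the count of support points in a slice agrees with the count of integer points of the Newton polytope in that slice.
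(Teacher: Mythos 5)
Your proof is correct and follows essentially the same route as the paper: invoke Corollary \ref{cor} using the $M$-convexity of the support (Corollary \ref{Cor:MpolyMConvex}) and the fact that all nonzero coefficients of $M_D$ are $1$ (Lemma \ref{Lem:ArbsUnique}, as in the proof of Theorem \ref{thm:denorm}). Your explicit remark that an $M$-convex set coincides with the lattice points of its convex hull, so that summing labels becomes counting integer points of the Newton polytope, is a step the paper leaves implicit, and it is a correct and worthwhile clarification.
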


\proof By the proof of Theorem \ref{thm:denorm}, all coefficients of 
$M_D$  are $1$ and its support is $M$-convex. Thus, applying Corollary  \ref{cor} yields the result. \qed

It follows from Theorem \ref{thm:spec} and Corollary \ref{seq} that in the special case where   $D=\overrightarrow{\mathcal{G}(L_G)}$, we can interpret the coefficients of the Alexander polynomial of $\Delta_L(-t)$ for a special alternating link $L$ as the integer point counts in a series of parallel hyperplanes  intersecting a  generalized permutahedron. Such an interpretation is closely related to the work of Li and Postnikov on slicing zonotopes ~\cite{slicing}. Moreover, K\'alm\'an's work \cite{tamas} implies that the  support of $M_D$ is a trimmed generalized permutahedron  \cite[Definition 11.2]{P05}  dependent on the dimap $D$.

\subsection{Log-concavity of the coefficients of $\Delta_{L_{G}}(-t)$ }

From Theorems \ref{thm:spec} and \ref{thm:denorm} and  Lemma \ref{pak}, we readily obtain:

\begin{recalltheorem} The coefficients of the Alexander polynomial $\Delta_L(-t)$ of a special alternating link $L$ form a log-concave sequence with no internal zeros. In particular, they are unimodal, proving Fox's conjecture in this case.  
\end{recalltheorem}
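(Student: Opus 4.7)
The plan is to chain together the three ingredients already developed in the excerpt: Theorem \ref{thm:spec} (which identifies $\mathrm{Homog}_q(\Delta_{L_G}(-t))$ as a specialization of the $M$-polynomial), Theorem \ref{thm:denorm} (which says the $M$-polynomial is denormalized Lorentzian), and Lemma \ref{pak} together with Proposition \ref{lem:log-concavity-of-coeffs} (which translate the Lorentzian property through variable identifications into log-concavity of coefficients). For a positive special alternating link $L=L_G$, take $D=\overrightarrow{\mathcal{G}(L_G)}$ and let $R_1,\ldots,R_l$ (resp.\ $R_{l+1},\ldots,R_k$) be the clockwise regions labeled by $-t$ (resp.\ by $1$) in Crowell's model, as in Theorem \ref{thm:spec}.

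The first main step is to produce the homogenized Alexander polynomial as a denormalized Lorentzian polynomial in two variables. Starting from $M_D(x_1,\ldots,x_k)\in \mathbb{R}_{\geq 0}[x_1,\ldots,x_k]$, which is denormalized Lorentzian by Theorem \ref{thm:denorm}, I would apply Lemma \ref{pak} repeatedly: first identify $x_1$ with $x_2$, then with $x_3$, and so on up to $x_l$, calling the common value $t$; then identify $x_{l+1}$ with $x_{l+2}$, and so on up to $x_k$, calling the common value $q$. Each identification preserves the denormalized Lorentzian property, and the polynomial obtained at the end is exactly $\widetilde{M_D}(t,q)=M_D(t,\ldots,t,q,\ldots,q)$, which by \eqref{qt} equals $\mathrm{Homog}_q(\Delta_{L_G}(-t))$. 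Hence $\mathrm{Homog}_q(\Delta_{L_G}(-t))$ is a denormalized Lorentzian homogeneous polynomial in the two variables $t,q$.

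The second step extracts the desired properties of the coefficient sequence of $\Delta_{L_G}(-t)$. Writing $\mathrm{Homog}_q(\Delta_{L_G}(-t))=\sum_{a} c_a\, t^a q^{d-a}$, log-concavity $c_a^2\geq c_{a-1}c_{a+1}$ follows directly from Proposition \ref{lem:log-concavity-of-coeffs} applied with $\alpha=(a,d-a)$ and $(i,j)=(1,2)$. The "no internal zeros" statement then follows from $M$-convexity of the support: in the degree-$d$ homogeneous bivariate setting, the support is contained in the line $\{(a,d-a):0\le a\le d\}$, and $M$-convexity forces it to be a set of consecutive values of $a$ (given two support points with first coordinates $a_1>a_2$, the $M$-exchange axiom with $i=1$ forces $j=2$, placing $(a_1-1,d-a_1+1)$ in the support and filling in the interval by induction). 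Thus the coefficient sequence of $\Delta_{L_G}(-t)$ is log-concave with no internal zeros, and in particular unimodal.

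The case of a negative special alternating link $L=L_G^{\text{neg}}$ is handled identically after replacing $\overrightarrow{\mathcal{G}(L_G)}$ by its transpose $\overrightarrow{\mathcal{G}(L_G^{\text{neg}})}^T$, which is again an alternating dimap whose clockwise regions are labeled uniformly by $-t$ or by $1$ (Remark \ref{Rk:negspecialization}); the argument above then applies verbatim. I do not foresee a serious obstacle: all technical work has already been absorbed into Theorems \ref{thm:spec} and \ref{thm:denorm}, and the only subtlety is verifying the "no internal zeros" clause from $M$-convexity in the bivariate homogeneous setting, which is a short combinatorial check.
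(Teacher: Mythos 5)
Your proposal is correct and follows essentially the same route as the paper: it chains Theorem \ref{thm:spec}, Theorem \ref{thm:denorm}, and Lemma \ref{pak} to show ${\rm Homog}_q(\Delta_{L}(-t))$ is denormalized Lorentzian, then applies Proposition \ref{lem:log-concavity-of-coeffs}, handling the negative case via Remark \ref{Rk:negspecialization}. Your only additions are welcome elaborations the paper leaves implicit, namely the iterated application of Lemma \ref{pak} and the short $M$-convexity check for the ``no internal zeros'' clause.
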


\proof If $L$ is a positive special alternating link, then by Theorems \ref{thm:spec} and \ref{thm:denorm} and Lemma \ref{pak}, we conclude that ${\rm Homog}_q(\Delta_{L}(-t))$ is denormalized Lorentzian. Therefore,  $N({\rm Homog}_q(\Delta_{L}(-t)))$ is Lorentzian. By Proposition \ref{lem:log-concavity-of-coeffs}, this implies that the coefficients of ${\rm Homog}_q(\Delta_{L}(-t))$, which are the same as those of $\Delta_{L}(-t)$, are log-concave with no internal zeros. 

The case where $L$ is a negative special alternating link follows from Remarks \ref{neglabel} and \ref{Rk:negspecialization}.
\qed

\section*{Acknowledgements} The second author is grateful to Tam\'as K\'alm\'an and Alexander Postnikov  for many inspiring  conversations over many years. The authors are also grateful to Mario Sanchez for his interest and helpful comments about this work. The authors also thank Avery St. ~Dizier for his careful reading of the manuscript. 

\bibliographystyle{alpha-label}
\bibliography{refs}

\end{document}